\documentclass[a4paper]{amsart}

\usepackage[utf8]{inputenc}
\usepackage[T1]{fontenc}
\usepackage[normalem]{ulem}
\usepackage{cancel}

\usepackage[pdftex]{graphicx}
\usepackage{latexsym}
\usepackage{amsmath,amssymb,amsthm}
\usepackage{mathtools}
\usepackage{etoolbox}
\usepackage[shortlabels]{enumitem}   %
\usepackage{xcolor}
\usepackage{mathpazo} %
\usepackage{nicefrac} %
\allowdisplaybreaks %

\usepackage{scalerel,stackengine} 
\stackMath
\newcommand\reallywidehat[1]{%
\savestack{\tmpbox}{\stretchto{%
  \scaleto{%
    \scalerel*[\widthof{\ensuremath{#1}}]{\kern-.6pt\bigwedge\kern-.6pt}%
    {\rule[-\textheight/2]{1ex}{\textheight}}%
  }{\textheight}%
}{0.5ex}}%
\stackon[1pt]{#1}{\tmpbox}%
}

\usepackage{standalone}

\usepackage[colorlinks=true,urlcolor=blue]{hyperref} 			%
\hypersetup{
	colorlinks,
	linkcolor={red!50!black},
	citecolor={blue!50!black},
	urlcolor={blue!80!black}
}
\usepackage[msc-links]{amsrefs}

\usepackage[noabbrev,capitalise]{cleveref}

\newtheorem{Theorem}{Theorem}[section]
\newtheorem{Lemma}[Theorem]{Lemma}		 
\newtheorem{Corollary}[Theorem]{Corollary}	
\newtheorem{Assumption}[Theorem]{Assumption} 
\theoremstyle{definition}
\newtheorem{Remark}[Theorem]{Remark}

\newcommand{\R}{\mathbb{R}} %
\newcommand{\N}{\mathbb{N}} %

\newcommand{\half}{1/2}
\newcommand{\mfrac}[2]{\scalebox{0.8}{$\dfrac{#1}{#2}$}{}}

\newcommand{\grad}{\nabla}
\renewcommand{\div}{\nabla \cdot}
\newcommand{\laplace}{\Delta}
\newcommand{\dintx}{\, \mathrm{d} x}

\newcommand{\dints}{\, \mathrm{d} \sigma}
\newcommand{\dint}[1]{\, \mathrm{d} #1}
\newcommand{\supp}{\operatorname{supp}}
\newcommand{\interior}{\operatorname{int}}

\newcommand{\defpnt}{{\hspace*{0.15em}:\hspace*{0.15em}}}
\newcommand{\htar}{h_{\mathrm{tar}}}

\def\L#1#2{{L^{#1}({#2})}}

\newcommand{\abs}[1]{\left|#1\right|}

\newcommand{\norm}[1]{\left\lVert #1 \right\rVert}
\newcommand{\bignorm}[1]{\big\lVert #1 \big\rVert}

\DeclareFontFamily{U}{matha}{\hyphenchar\font45}
\DeclareFontShape{U}{matha}{m}{n}{
<-6> matha5 <6-7> matha6 <7-8> matha7
<8-9> matha8 <9-10> matha9
<10-12> matha10 <12-> matha12
}{}
\DeclareSymbolFont{matha}{U}{matha}{m}{n}

\DeclareFontFamily{U}{mathx}{\hyphenchar\font45}
\DeclareFontShape{U}{mathx}{m}{n}{
<-6> mathx5 <6-7> mathx6 <7-8> mathx7
<8-9> mathx8 <9-10> mathx9
<10-12> mathx10 <12-> mathx12
}{}
\DeclareSymbolFont{mathx}{U}{mathx}{m}{n}

\DeclareMathDelimiter{\vvvert} {0}{matha}{"7E}{mathx}{"17}%

\DeclarePairedDelimiterX{\energynormfacotry}[1]
{\vvvert}
{\vvvert}
{\ifblank{#1}{\:\cdot\:}{#1}}

\newcommand{\energynorm}[1]{\energynormfacotry*{#1}}
\newcommand{\bigenergynorm}[1]{\energynormfacotry[\big]{#1}}

\newcommand{\ip}[2][]{
	\ifthenelse{ \equal{#1}{} }
	{\bigl( #2 \bigr)}
	{\bigl( #2 \bigr)_{#1}}
}
\newcommand{\abilSymb}{a}
\newcommand{\abil}[2][]{
	\ifthenelse{ \equal{#1}{} }
	{\abilSymb\bigl( #2 \bigr)}
	{\abilSymb_{#1}\bigl( #2 \bigr)}
}

\newcommand\restr[2]{{          %
\left.\kern-\nulldelimiterspace %
#1                              %
\vphantom{\big|}                %
\right|_{#2}                    %
}}
\newcommand{\bigrestr}[2]{#1\big|_{#2}}

\DeclarePairedDelimiterX\setc[2]{\{}{\}}{\,#1 \;\delimsize\vert\; #2\,}
\DeclarePairedDelimiter\monosetc{\{}{\}}

\newcommand{\Th}[1][]{
	\ifthenelse{ \equal{#1}{} }
	{\mathcal{T}_h}
	{\mathcal{T}_h(#1)}
} 
\newcommand{\hmin}{h_{\min}}

\newcommand{\PkK}{\mathbb{P}_k(K)}
\newcommand{\PkTh}[1][]{\mathcal{P}_k(\Th[#1])}

\newcommand{\PkZeroTh}[1][]{\mathcal{P}_{k,0}(\Th[#1])}

\newcommand{\LocInt}{\mathcal{I}_K}
\newcommand{\LocIntRef}{\mathcal{I}_{\widehat{K}}}
\newcommand{\NodalInt}{\mathcal{I}_h}

\newcommand{\fulldomain}{\Omega}
\newcommand{\dfulldomain}{\partial \Omega}

\newcommand{\Dh}{D_h}
\newcommand{\Patch}[2]{P_{#1}(#2)}
\newcommand{\bLayer}[1]{\Sigma_{#1}}
\newcommand{\ellmax}{\ell_\text{max}}

\newcommand{\Cutoff}{ \eta_\ell}  %
\newcommand{\IntCutoff}{ \raisebox{0.2em}{$\chi$}_{h}^{\ell}} %

\newcommand{\CCutoff}{ c_\eta}   %

\newcommand{\CInth}{c_{\mathcal{I}}}  %

\ifpdf
\hypersetup{
  pdftitle={A discrete Saint-Venant principle for finite element discretizations of elliptic problems},
  pdfauthor={Tim Buchholz and Julian Dörner}
}
\fi

\begin{document}

\title[Discrete Saint-Venant principle for FEM discretizations]{A discrete Saint-Venant principle for finite element discretizations of elliptic problems}

\author{Tim Buchholz}
\author{Julian Dörner}
\address{T. Buchholz, J. Dörner \hfill\break 
Institute for Applied and Numerical Mathematics,\hfill\break
Karlsruhe Institute of Technology (KIT), \hfill\break
D-76128 Karlsruhe, Germany}
\email{\{tim.buchholz,julian.doerner\}@kit.edu}

\begin{abstract}
  The present paper studies finite element discretizations of second-order elliptic boundary value problems with homogeneous right-hand side and inhomogeneous boundary conditions. 
  We establish discrete spatial decay estimates on element patches for the energy norm of the discrete solution, showing that the influence of boundary data decays exponentially away from the boundary. 
  The resulting estimates are a discrete analog of Saint-Venant-type principles and provide a rigorous foundation for localization arguments in finite element methods. 
  As an application, we present how these results can be employed in the convergence analysis of domain decomposition methods, on the example of the discrete parallel Schwarz method.
  Finally, the findings are thoroughly demonstrated on several numerical examples.
\end{abstract}

\keywords{finite element methods, Saint-Venant principle, elliptic boundary value problems, localization}

\subjclass[2020]{
Primary 65N30, 65N22
Secondary 35J15, 65N55
}

\maketitle

\section{Introduction}
\label{sec:Introduction}

The spatial decay of boundary-induced effects in elliptic problems can be traced back to the Saint-Venant principle, originally formulated in the context of linear elasticity in the mid-19th century \cite{StV1856}. 
A classical and widely cited description states that \textit{'...the difference between the effects of two different but statically equivalent loads becomes very small at sufficiently large distances from the load'}, cf. \cite{Lov44}. 
Early rigorous investigations focused on linear elasticity and established decay properties of stresses and strains away from regions of load application, see, e.g., \cite{Tou65,Kno66}.

It was soon recognized that the underlying mechanism is not specific to elasticity but rather a consequence of ellipticity. 
This insight led to extensions of Saint-Venant-type results to general second-order elliptic equations, where spatial decay of solutions was established primarily in a pointwise sense and under strong regularity assumptions, cf. \cite{Kel74,WheHo76,HorWh77}. 
Among these developments, the work of Mieth \cite{Mieth75,Mieth76} is particularly relevant for the present paper, as it derives a decay estimate at the level of the energy norm. 
Since our analysis focuses on energy quantities, this perspective is the important analytical motivation of our work. Thus, we repeat the argument in brief below, see \Cref{subsec:analyticalMotivation}.
 
Comprehensive perspectives on the Saint-Venant principle and its extensions were later provided in a series of review articles by Horgan \cite{HorKn83,Hor89,Hor96}, emphasizing both the physical origins and the mathematical structure of the principle. 
While these classical results are formulated at the continuous level, they establish the conceptual foundation for spatial energy decay as a generic feature of elliptic operators.

In the present work, we adopt this viewpoint and investigate analogous decay phenomena directly at the discrete level. 
Our results may thus be interpreted as a finite element analogue of Saint-Venant-type principles, providing quantitative spatial energy decay estimates for discretizations of elliptic problems with homogeneous right-hand side and inhomogeneous boundary conditions.

Spatial decay properties play a central—though often implicit—role in the analysis of domain decomposition (DD) methods; see, e.g., \cite{TosWi05,Gan08,GanZh22}. 
While many modern DD approaches are formulated at the matrix level, their analysis is frequently guided by continuous decay arguments, as discussed in detail in \cite{Gan08}.
In particular, superlinear convergence results for Schwarz waveform relaxation methods rely on analytic decay properties of solutions to elliptic problems on unbounded or semi-infinite domains, cf. \cite{Gan97,GilKe02}.

Recent developments in DD methods for time-harmonic wave propagation have been made; see \cite{ClaPar2022}. The authors formulate the problem on the skeleton of the subdivided domain and define exchange operators that govern the communication between subdomains. Many possible choices of such exchange operators are studied in \cite[Sec.~5]{ClaPar2022}. A particularly interesting choice for our work is a global exchange operator, which exhibits excellent stability properties but appears at first glance to be incompatible with a fast algorithm. In \cite{ClaAtc25}, it is numerically demonstrated that a truncation of this operator is sufficient and enables fast evaluation.
This operator fits in the class of problems studied below.

More recent surveys, including work on time-parallel methods for parabolic and hyperbolic problems, further underline the importance of decay phenomena in both theoretical and practical contexts \cite{GanWZ25}. 
Related localization arguments also arise in tent-pitching methods combined with implicit time discretizations (cf.~\cite{GopSW17}), where the causality constraints limiting the admissible height of space--time tents may be interpreted as reflecting a localized influence of boundary and interface data.
Finally, similar energy-based decay statements appear in \cite{GalMa23} and constitute one of the main analytical tools in our own recent work \cite{BucH25}.

Decay estimates are most explicitly exploited in numerical homogenization and multiscale methods. 
In the context of the localized orthogonal decomposition (LOD), Malqvist and Peterseim \cite{MalP14} proved the essential decay of corrector functions via Caccioppoli-type inequalities, justifying the truncation of global correctors to local patches; see also \cite{MalP21}. 
Closely related principles appear in generalized finite element methods (GFEM) and multiscale GFEM variants \cite{MaScDo22,AlbMaSc25}, while alternative decay arguments based on Green's functions have been explored in \cite{Glo11}.
Decay estimates of a similar nature also underlie localization phenomena for eigenstates of Schr\"odinger operators, which are closely related to the exponential decay of the associated Green's functions, as shown in \cite{AltHP20}.

These works demonstrate that spatial energy decay is a fundamental structural property underlying many modern numerical methods.
The present paper aims to strengthen this perspective by establishing such decay directly for finite element discretizations, following the arguments of the aforementioned analytical results. 
We prove a fully discrete Saint-Venant-type principle with explicit mesh- and problem-dependent constants and demonstrate that its asymptotic behavior agrees with the decay rate of the analytical solution.
We demonstrate the results on an overlapping domain decomposition algorithm.
Furthermore, we thoroughly validate our results numerically, verify the dependence of problem-dependent parameters on the decay rate, and demonstrate their asymptotic optimality.

\subsection*{Outline}
The paper is organized as follows. 
We close this section below with the analytical motivation on a simple example.
\Cref{sec:Preliminaries} introduces the model problem and its variational formulation, the conforming finite element discretization, and the definition of local energies on element patches.
Following that, we state in \Cref{sec:MainResults} our main results, i.e., the discrete spatial decay estimate for the finite element approximation, and show that its asymptotic behavior agrees with that of the corresponding continuous solution.
In \Cref{sec:Cutoff}, we build the necessary tools for the proofs of the main results in \Cref{sec:ProofMainResults}.
\Cref{sec:Applications} demonstrates how the decay estimate can be
used in the convergence analysis of an overlapping domain decomposition
method.
Finally, \Cref{sec:NumericalValidation} presents numerical experiments
that investigate the parameter dependence and asymptotic behavior
predicted by the theory.

\subsection{Analytical motivation} 
\label{subsec:analyticalMotivation}

We briefly illustrate the proof technique on the continuous level to motivate our  subsequent work.
For simplicity, we choose a simplified setting which lags no descriptiveness and refer to the literature mentioned above for a more general setting; especially to the work from Mieth and Horgan, in particular \cite[Chapter 2.3.1]{Mieth75} and \cite[Section II.D]{HorKn83}.

Let \(\Omega\subset \R^2\) denote a smooth domain, star shaped with respect to the origin.
The function \(u\in C^2(\overline{\Omega})\) denotes the strong solution of the problem
\begin{equation}
    \label{eq:motivationProblem}
    -\laplace u +\lambda^2 u = 0
    \,\, \text{ in } \Omega,
    \quad u = g\,\, \text{ on } \Gamma = \partial\Omega,
\end{equation}
where \(g\) is a sufficiently smooth boundary datum.
Centered around the origin, we define a series of subdomains of \(\Omega\) as
\begin{equation*}
    \Omega(s) = \setc{sx}{x\in\Omega},\quad 0 < s \leq 1
    \qquad
    \Omega(1) = \Omega,
\end{equation*}
cf. \Cref{fig:SubDomainSeries}.
We define the energy functional of the solution as
\(
    E(s) = \int_{\Omega(s)} |\nabla u|^2 + \lambda^2 u^2 \dintx
\)
and prove the result
\begin{equation}
    \label{eq:DesiredResult}
     E(s) \leq e^{-2\lambda(1 - s)}E(1), \,\,\text{ for } 0 < s \leq 1.
\end{equation}
Hence, a smaller sub-domain has an exponentially smaller energy concentrated on it.

\begin{figure}
    \centering
    \includegraphics{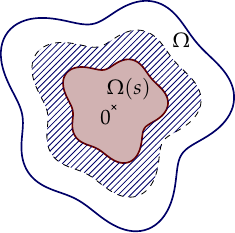}
    \caption{A smooth domain $\Omega$, star shaped with respect to the origin. A subdomain $\Omega(s)$, for \(s>0\) is drawn in red, while the hatched blue area depicts $\Omega(s+\delta)\setminus\Omega(s)$.}
    \label{fig:SubDomainSeries}
\end{figure} 

The proof follows two main steps, which we later replicate in the discrete setting.
We first derive a differential inequality for the energy.
By integration by parts, eq.~\eqref{eq:motivationProblem}, and Young's inequality, we obtain
\begin{equation*}
    E(s) 
        = \int_{\Gamma(s)}u \nabla u \cdot n \dints
        \leq \frac{1}{2\lambda} \int_{\Gamma(s)} |\nabla u|^2 + \lambda^2 u \dints.
\end{equation*}
Integration of this inequality with \(\delta = \frac{1}{2}(1 - s)\) shows that
\begin{equation*}
    \int_{s}^{s+\delta} E(t)\dint{t}
        \leq \frac{1}{2\lambda} \int_{\Omega(s+\delta)\smallsetminus \Omega(s)}|\nabla u|^2 + \lambda^2 u \dints
        = \frac{1}{2\lambda}(E(s+\delta) - E(s)). 
\end{equation*}
Thus, we obtain the differential inequality
\begin{equation*}
    -\frac{\mathrm{d}}{\mathrm{d}s} \int_{s}^{s+\delta} E(t) \dint{t} 
        + 2\lambda \int_{s}^{s+\delta} E(t) \dint{t}
        \leq 0.
\end{equation*}
In the next step, we solve the differential inequality and use monotonicity of the energy functional to obtain
\begin{equation*}
    \delta E(s) 
        \leq \int_{s}^{s+\delta} E(t) \dint{t} 
        \leq e^{-2\lambda(1 -s)}\int_{s+\delta}^{1}E(t)\dint{t}
        \leq \delta e^{-2\lambda(1 -s)}E(1).
\end{equation*}
Division by \(\delta\) then provides the desired result \eqref{eq:DesiredResult}.

\bigskip

\section{Preliminaries}
\label{sec:Preliminaries}
\subsection{Sobolev spaces \& traces}
In this section, we  introduce the functional analytic framework.
Let \(\Omega\subset \R^d\) be a Lipschitz domain.
Throughout the  manuscript, we make use of the Lebesgue space \(L^2(\Omega)\) of square integrable functions and denote its inner product and norm with \(\bigl(\cdot,\cdot\bigr)_{\Omega}\) and \(\norm{\cdot}_{L^2(\Omega)}\), respectively.

We denote by \(H^1(\Omega)\) the Sobolev space of functions with weak derivatives up to order one in \(L^2(\Omega)\).
The closure of the compactly supported test functions \(C^\infty_c(\Omega)\) in \(H^1(\Omega)\) is denoted by \(H^1_0(\Omega)\).
For the usual norm and semi-norm of \(H^1(\Omega)\) we write \(\norm{\cdot}_{H^1(\Omega)}\) and \(\abs{\cdot}_{H^1(\Omega)}\), respectively.

It is well-known that every function \(u\in H^1(\Omega)\) admits a well-defined trace \(\restr{u}{\partial \Omega} \in H^{\half}(\partial \Omega)\), where the latter space denotes the fractional Sobolev space of order one half on the boundary \(\partial \Omega\);
see, for example, \cite[Def.~2.14,Thm.~3.10]{ErnG21I}.
In the model problem considered below, we will make use of functions that are only defined on a relatively open Lipschitz part of the boundary, i.e., \(\Gamma \subset \partial \Omega\).
Thus, we define the Lions-Magenes space as 
\[
    H^{\half}_{00}(\Gamma) = \setc{g\in H^{1/2}(\Gamma)}{\widetilde{g} \in H^{1/2}(\partial \Omega)},
    \qquad \norm{g}_{H^{\half}_{00}(\Gamma)} = \norm{\widetilde{g}}_{H^{\half}(\partial \Omega)},
\]
where \(\widetilde{g}\) denotes the extension of $g$ by zero to the whole of \(\partial \Omega\).
This space is alternatively defined via interpolation of Hilbert spaces, and we refer to \cite{LionsMagenes72} for a thorough treatment.
Infamous for their abstractness, we emphasize that \(H^{\half +\epsilon}_{0}(\Gamma) \subset H^{\half}_{00}(\Gamma)\) for any \(\epsilon > 0\).

\subsection{Problem \& variational formulation}\label{subsec:ProblemAndVariational}
We now introduce the model problem.
Let \(\fulldomain \subset \mathbb{R}^d\) be a bounded and simply connected polyhedral domain.
We denote with \(\Gamma\subseteq \dfulldomain\) a simply connected subset of the boundary and define the complement \(\Gamma^C = \dfulldomain \setminus \Gamma\).
Let $\lambda \geq 0$ and $A \in \L{\infty}{\fulldomain, \R^{d\times d}}$ be a positive and bounded coefficient that satisfies $\alpha \leq A(x) \leq \beta$ almost everywhere in \(\fulldomain\) for positive constants $\alpha,\beta > 0$.

For a given  \(g\in H^{\half}_{00}(\Gamma)\), we consider the following boundary value problem 
\begin{equation}
    \label{eq:modelProblem}
    \begin{aligned}
         - \div(A \grad u) + \lambda^2 u
            &= 0, \quad \text{in } \fulldomain,\\
        \restr{u}{\Gamma} 
            &= g,\quad \text{on } \Gamma,\\
        \restr{u}{\Gamma^C}
            &= 0,\quad \text{on } \Gamma^C.
    \end{aligned}
\end{equation}
Recall that \(\widetilde{g}\) is the extension of \(g\) by zero.  
As discussed in the previous section, we note that \(\widetilde{g} \in H^{1/2}(\dfulldomain)\).

We emphasize that we allow in general \(\Gamma = \partial \fulldomain\).
Moreover, the case \(\lambda = 0\) is permitted. 
However, to ensure coercivity of the problem, we restrict ourselves to the following two cases.
\begin{Assumption}\label{ass:DecayAssumption}
    Either \(\lambda > 0\) or \(\Gamma^C \neq \emptyset\).
\end{Assumption}

In the following, we briefly discuss the well-posedness of \eqref{eq:modelProblem}.
Since we have \(\widetilde{g}\in H^{\half}(\dfulldomain)\), we find a lifting $u_g\in H^1(\fulldomain)$ such that $\restr{u_g}{\dfulldomain} = \widetilde{g}$ with
\begin{equation}
    \label{eq:StabilityDiscreteLifting}
        \norm{u_g}_{H^1(\fulldomain)} \leq C_{\gamma} \|\widetilde{g}\|_{H^{\half}(\dfulldomain)} = C_{\gamma} \norm{g}_{H^{\half}_{00}(\Gamma)}, 
\end{equation} 
where \(C_{\gamma} > 0\) is constant, uniform in \(g\).
We introduce the continuous bilinear form
\begin{equation*}
    \widehat{a}_\Omega\,:\, H^1(\Omega)\times H^1(\Omega) \to \R,\quad
    (u,v) \mapsto \int_{\Omega} (A\nabla u\cdot) \nabla v\,\mathrm{d}x,
\end{equation*}
and the restriction 
\begin{equation*}
    a_\Omega = \restr{\widehat{a}_\Omega}{H^1_0(\Omega)\times H^1_0(\Omega)} .
\end{equation*}
The bilinear form induces an equivalent semi-norm on \(H^1(\Omega)\) defined as
\begin{equation*}
    \abs{u}_{a,\Omega}^2  = \widehat{a}_{\Omega}(u,u).
\end{equation*}
Thus, we define the natural energy norm of our problem as
\begin{equation}\label{eq:NaturalEnergyNorm}
    \energynorm{u}_\Omega^2 = \abs{u}_{a,\Omega}^2 + \lambda^2 \norm{u}_{L^2(\Omega)}^2.
\end{equation}
The variational formulation of problem \eqref{eq:modelProblem} reads: Seek \(u \in H^1(\fulldomain)\) such that
\(u_0 = u - u_g \in H^1_0(\fulldomain)\) and
\begin{equation}
    \label{eq:variationalFormulationAnalytic}
    \abil[\fulldomain]{u_0,v} + \lambda^2 \ip[\fulldomain]{u_0,v} = b(v), \quad \text{for all } v\in H^1_{0}(\fulldomain),
\end{equation}
with the continuous linear form \(b(v) = -\widehat{a}_{\fulldomain}(u_g,v) - \lambda^2(u_g,v)_{\fulldomain}\).
Note that the left-hand side of \eqref{eq:variationalFormulationAnalytic} is coercive.
The Lax-Milgram lemma, see \cite[Lem.~25.2]{ErnG21II}, thus shows that \eqref{eq:variationalFormulationAnalytic} is well-posed, and we obtain the \textit{a priori} estimate \(\energynorm{u_0}_{\Omega}\leq \energynorm{u_g}_{\Omega}\).
Furthermore, we obtain the estimate for the solution 
\begin{equation}\label{eq:APrioriEstimateSolution}
    \energynorm{u}_{\Omega} 
     \leq 2 \max\monosetc{\beta^{\half},\lambda} \norm{u_g}_{H^1(\Omega)} \leq  2 C_{\gamma} \max\monosetc{\beta^{\half},\lambda} \norm{g}_{H^{\half}_{00}(\Gamma)}.
\end{equation}

\subsection{Space discretization by conforming finite elements}
\label{subsec:FEM}
Next, we discretize the variational problem by standard conforming Lagrange finite elements.
We consider an affine, shape-regular, matching simplicial mesh $ \Th = \Th[\fulldomain] $ of $ \fulldomain $, see, e.g., \cite[Definition 8.11]{ErnG21I}, where the parameter $h$ denotes the maximal diameter of the elements in $\Th$. We denote the minimal diameter of the elements in $\Th$ by $\hmin$.
Let \(K\in \Th\) be an element.
We denote the set of all polynomials in $d$ variables of degree $\leq k$ by $\PkK$ for $k\geq 1$.
Based on these sets of polynomials we introduce the finite-dimensional %
approximation space
\begin{equation}
          \PkTh \coloneqq \big\{  v_h \in L^1(\fulldomain; \R) \quad | \quad  \restr{v_h}{K} \in \PkK, \; \forall K \in \Th \big\} \cap C^0(\fulldomain; \R).
\end{equation}
The subspace with homogenous boundary traces is denoted by \(\PkZeroTh\).

Since the treatment of inhomogeneous boundary data is essential to our problem, we briefly recall it in the following and refer to \cite{ErnG21II} for more details.
A finite element method is only able to produce solutions with boundary traces in the discrete trace space \(\restr{\PkTh}{\dfulldomain}\).
Therefore, the boundary data needs to be approximated, i.e., \(g\approx g_h\) for a suitable \(g_h\in\restr{\PkTh}{\dfulldomain}\).
For the approximate \(g_h\) we choose, like in the continuos case, a lifting
\begin{equation}\label{eq:AbstractDiscreteLifiting}
    u_{hg} \in \PkTh[],\qquad \text{s.t. } \restr{u_{hg}}{\dfulldomain} = g_h.
\end{equation}

The discretized analog of \eqref{eq:variationalFormulationAnalytic} then reads: Seek \(u_h\in \PkTh\) such that \(u_{h0} = u_h - u_{hg} \in \PkZeroTh\) and
\begin{equation}
    \label{eq:variationalFormulationDiscrete}
    \abil[\fulldomain]{u_{h0},v_h} + \lambda^2 (u_{h0},v_h)_\fulldomain 
        = b_h(v_h), \quad \text{for all } v_h\in \PkZeroTh,
\end{equation}
where \(b_h(v_h) = -\widehat{a}_\Omega(u_{hg},v_h) - \lambda^2 (u_{hg},v_h)_{\fulldomain}\).

As in the continuous setting, coercivity ensures well-posedness via the Lax-Milgram lemma, and we obtain analogously to \eqref{eq:StabilityDiscreteLifting}
\begin{equation}\label{eq:APrioriEstimateDiscreteSolution}
    \energynorm{u_h}_{\Omega} 
    \leq 2\bigenergynorm{u_{hg}}_{\Omega} 
    \leq 2 \max\monosetc{\beta^{\half},\lambda} \bignorm{u_{hg}}_{H^1(\Omega)}.
\end{equation}

A particular choice of a lifting \(u_{hg}\) in \eqref{eq:AbstractDiscreteLifiting} is a nodal one.
We briefly recall its construction.
For each element \(K \in \Th\), we introduce the nodal basis
\begin{equation*}
    \mathcal{N}_K \coloneqq \monosetc{ \sigma_{K,1}, \dots, \sigma_{K,N_k^d} }
\end{equation*} consisting of the point evaluations $\sigma_{K,i}(v) = v(a_{K,i})$ at suitable nodal points $a_{K,i} \in K$ for $i=1,\dots, N_k^d$, see e.g. \cite[Section 7.4]{ErnG21I}.
Hence, $(K,\PkK,\mathcal{N}_K)$ is the standard Lagrange finite element.
We denote the set of all nodal points in $\Th$ by $\mathcal{A}_h \coloneqq  \{a_1,\dots,a_{N_h}\}$ and 
corresponding global shape functions by $\varphi_a$ for $a \in \mathcal{A}_h$, as in \cite[Def. 19.2]{ErnG21I}.
The set of nodal points on the boundary is defined as \(\mathcal{A}_h^\partial = \setc{a\in \mathcal{A}_h}{\restr{\varphi_a}{\dfulldomain} \neq 0}\).

Let \(g \in H^{s^\ast}_0(\Gamma)\), for \(s^\ast > (d-1)/2\).
This ensures that the embedding \(H^{s}_0(\Gamma) \hookrightarrow C^0(\overline{\Gamma})\) is continuos, hence the pointwise evaluation of \(g\) is well-defined.
Thus, the nodal approximation of \(\widetilde{g}\) at the boundary is given by
\(
    g_h \coloneqq \sum_{a\in \mathcal{A}_h^\partial} \widetilde{g}(a) \restr{\varphi_a}{\dfulldomain},
\)
and the canonical lifting by
\begin{equation}
    \label{eq:def-nodal-lifting}
    u_{hg} \coloneqq \sum_{a\in \mathcal{A}_h^\partial} \widetilde{g}(a) \varphi_a \in  \PkTh,
    \quad  
\end{equation}
with the property that $ \bigrestr{u_{hg}}{\dfulldomain} =  g_h$.

While this nodal lifting is the natural choice for implementation, it is often desirable for analysis to introduce a discrete lifting that requires weaker regularity assumptions on g, and satisfies an estimate of the form \eqref{eq:StabilityDiscreteLifting}.
In Appendix~\ref{sec:LocalBoundaryLifting}, we construct such a lifting, which is later used for the domain decomposition application in Section~\ref{sec:Applications}.
Note, that the main results presented in Section~\ref{sec:MainResults} do not depend on the particular choice of the boundary lifting $u_{hg}$. This allows to pick the construction of $u_{hg}$ dependent on the requirements driven by the application.

\subsection{Patches \& patch energy}
In the following we introduce layered patches and associated energies.
Let $\Dh \subset \Th$ be a fixed subset of elements.
The set \(\Dh\) is called \textit{suitable} if $\Gamma \cap \overline{K} = \emptyset$ for all $K\in \Dh$, hence $\Dh$ does not touch the inhomogeneous part of the boundary.

We define patches of elements around $ \Dh $ recursively for \(\ell \in \N\) by 
\begin{equation}
    \label{eq:defPatches}
    \begin{aligned}
        \Patch{0}{\Dh} &\coloneqq \{  K  \; | \;  K \in \Dh\} = \Dh, \\
        \Patch{\ell}{\Dh} &\coloneqq \{ K \in \Th \; | \; \exists \widehat{K} \in \Patch{\ell -1}{\Dh} \; : \; \overline{K} \cap \overline{\widehat{K}} \neq \emptyset\}.
    \end{aligned}
\end{equation}
Patches are nested by definition, i.e., $\Patch{\ell}{\Dh} \subset \Patch{\ell + 1}{\Dh} $.
We call $\Patch{\ell}{\Dh}$ the patch around $\Dh$ of size $\ell$ and define the maximal suitable patch size as
\begin{equation}\label{eq:MaxPatchSize}
    \ellmax = \ellmax (\Dh) := \max \setc{\ell \in \N}{\forall K \in \Patch{\ell}{\Dh}:\,  \overline{K} \cap \supp u_{hg} = \emptyset}.
\end{equation}
Thus, \(\ellmax\) is the maximal number of layers separating the set \(\Dh\) from the support of the discrete lifting.
For a nodal lifting, as described in \eqref{eq:def-nodal-lifting}, the support of \(u_{hg}\) consists of one boundary layer around $\Gamma$.
Other discrete liftings might have a larger support, which we account for in \eqref{eq:MaxPatchSize}.
We refer to \Cref{Fig:Patches} for an illustration of the construction.

Finally, the \(\ell\)-th boundary layer of cells is denoted with 
\begin{equation}\label{eq:BoundaryLayer}
    \bLayer{\ell}(\Dh) \coloneqq \Patch{\ell}{\Dh} \setminus \Patch{\ell - 1}{\Dh}.
\end{equation}

\begin{figure}
    \begin{center}
        \includegraphics{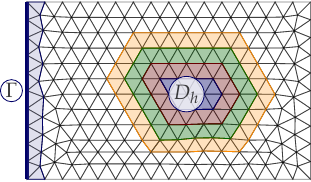}
    \end{center}
    \caption{First three patches $\Patch{\ell}{\Dh}$ for $\ell = 1$ (red), $\ell = 2$ (green) and $\ell = 3$ (orange). The support of nodal lifting $u_{hg}$ \eqref{eq:def-nodal-lifting} around $\Gamma$ is depicted in light-blue, and \(\ellmax = 7\).}
    \label{Fig:Patches}
\end{figure}

Let \(u_h = u_{h0} + u_{hg}\in \PkTh\) such that \(u_{h0} \in \PkZeroTh\) solves the discrete problem~\eqref{eq:variationalFormulationDiscrete}. 
For any $\ell \in \N$ we define the \textit{discrete energy in a patch around $\Dh$} by 
\begin{equation}
    \label{eq:DefEnergyOnPatches}
    E_{\Dh}(\ell) \coloneqq \sum_{K \in \Patch{\ell}{\Dh}}  \int_K A \grad u_h \cdot \grad u_h \dint{x} + \lambda^2 \int_K u_h u_h \dintx .
\end{equation}
This quantity is the discrete counterpart of the continuous energy functional introduced in \Cref{subsec:analyticalMotivation} and will be the central object in the discrete Saint-Venant estimate.

\bigskip

\section{Main Results}
\label{sec:MainResults}

We are now in a position to state the main result of this work.
It establishes a discrete Saint-Venant-type principle for conforming finite elements.
More precisely, we show that the discrete energy contained in successive element patches around a suitable interior cell set contracts by a factor strictly smaller than one.
This layer-wise contraction implies exponential decay of the discrete energy with respect to the number of mesh layers separating the patch from the inhomogeneous boundary.

\begin{Theorem}
    \label{Thm:discreteSaintVenant}
    Let \Cref{ass:DecayAssumption} hold and $0\leq\ell < \ellmax$. 
    \begin{enumerate}[a),align=left,leftmargin=\parindent,labelwidth=\parindent,labelsep=*]
        \item \label{Thm:discreteSaintVenant:A} If $\lambda > 0$, then
        \begin{equation}
            \label{Thm:discreteSaintVenant:A:equation}
            E_{\Dh}(\ell) \leq \frac{M_{\lambda}}{1+ M_{\lambda}} E_{\Dh}(\ell + 1) \quad \text{ with } \quad 
            M_{\lambda}= C_1 \bigl(1+\frac{1}{2\lambda \hmin}\bigr).
        \end{equation}
          
        \item \label{Thm:discreteSaintVenant:B} If $\lambda = 0$, then
        \begin{equation}
            \label{Thm:discreteSaintVenant:B:equation}
            E_{\Dh}(\ell) \leq \frac{M_0}{1+ M_0} E_{\Dh}(\ell + 1) 
            \quad \text{ with } \quad  
            M_0 = C_2 \bigl(1+\frac{1}{\hmin}\bigr).
        \end{equation}
      \end{enumerate}
    The constants $C_1,C_2>0$ only depend on the bounds of the material coefficient $\alpha, \beta$, and the shape-regularity of the mesh but are uniform in the mesh size and $\lambda$.
\end{Theorem}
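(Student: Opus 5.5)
The plan is to mimic the continuous argument of \Cref{subsec:analyticalMotivation} with a discrete cutoff function. The target is the layer estimate
\[
E_{\Dh}(\ell) \leq M \bigl(E_{\Dh}(\ell+1)-E_{\Dh}(\ell)\bigr),
\]
which rearranges directly to $E_{\Dh}(\ell)\leq \frac{M}{1+M}E_{\Dh}(\ell+1)$. Here $E_{\Dh}(\ell+1)-E_{\Dh}(\ell)$ is exactly the energy on the boundary layer $\bLayer{\ell+1}(\Dh)$, so this is the discrete analogue of the inequality "$E(s)\le \frac{1}{2\lambda}\times$ energy on a thin shell".

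\emph{Cutoff.} I would take a function $\Cutoff$ equal to $1$ on $\Patch{\ell}{\Dh}$, equal to $0$ outside $\Patch{\ell+1}{\Dh}$, and piecewise linear in between. A concrete choice is the sum of the $\mathbb{P}_1$ hat functions of the vertices of elements in $\Patch{\ell}{\Dh}$. Its gradient is supported in $\bLayer{\ell+1}$ and satisfies $|\nabla\Cutoff|\le c_\eta/\hmin$ by shape regularity. Since $\Cutoff u_h$ is not in $\PkTh$, I would test instead with the Lagrange interpolant $v_h=\NodalInt(\Cutoff u_h)\in\PkZeroTh$. This function vanishes on $\dfulldomain$ because $\ell<\ellmax$ keeps $\Patch{\ell+1}{\Dh}$ away from $\supp u_{hg}$, hence away from $\Gamma$. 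It also coincides with $u_h$ on $\Patch{\ell}{\Dh}$.

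\emph{Main identity.} On $\Patch{\ell+1}{\Dh}$ we have $u_h=u_{h0}$, and the discrete equation gives $\widehat a(u_h,v_h)+\lambda^2(u_h,v_h)=0$. Splitting the integrals into $\Patch{\ell}{\Dh}$, where $v_h=u_h$, and $\bLayer{\ell+1}$ yields
\[
E_{\Dh}(\ell) = -\sum_{K\in\bLayer{\ell+1}}\int_K A\nabla u_h\cdot\nabla v_h+\lambda^2 u_h v_h \dintx .
\]
Cauchy--Schwarz then bounds the right-hand side by $\energynorm{u_h}_{\bLayer{\ell+1}}\,\energynorm{v_h}_{\bLayer{\ell+1}}$.

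\emph{Local interpolation stability (the main obstacle).} The key step is to show, elementwise on $K\in\bLayer{\ell+1}$,
\[
\|v_h\|_{L^2(K)}\le C\|u_h\|_{L^2(K)}, \qquad \|\nabla v_h\|_{L^2(K)}\le C\bigl(\|\nabla u_h\|_{L^2(K)}+\hmin^{-1}\|u_h\|_{L^2(K)}\bigr).
\]
I would prove this by scaling to the reference element. There $\LocIntRef$ is bounded on polynomials of degree $k+1$ by norm equivalence in finite dimensions, and $\Cutoff u_h|_K$ is such a polynomial. The product rule then brings in $\|\nabla\Cutoff\|_\infty\le c_\eta/h_K$. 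This needs care to keep the constants independent of $h$ and $\lambda$, using only shape regularity and $\alpha,\beta$.

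\emph{Conclusion by cases.} Combining the two previous steps gives
\[
\energynorm{v_h}_{\bLayer{\ell+1}}^2\lesssim \beta\|\nabla u_h\|^2+\Bigl(\frac{\beta}{\hmin^2}+\lambda^2\Bigr)\|u_h\|^2 .
\]
\begin{itemize}
\item If $\lambda>0$, bound $\hmin^{-2}\|u_h\|^2\le (\lambda\hmin)^{-2}\lambda^2\|u_h\|^2$. Then $\energynorm{v_h}\le C\bigl(1+\frac{1}{\lambda\hmin}\bigr)\energynorm{u_h}$ on the layer, and this yields $M_\lambda$.
\item If $\lambda=0$, the $L^2$ term is not controlled by the energy on the layer. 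I would handle it by a Poincaré/Friedrichs argument: since $u_h$ satisfies the discrete equation with homogeneous data locally, one can first subtract a constant from $u_h$ in $v_h$. This is legitimate because $\widehat a(u_h,\NodalInt(\Cutoff c))$ only involves the layer, and its contribution can be absorbed using the equation tested with $\NodalInt(\Cutoff)$. Alternatively, if $\Gamma^C\ne\emptyset$ touches the layer, one uses Friedrichs directly. This yields $\|u_h-c\|_{L^2(\text{layer})}\lesssim$ (layer width) $\times\|\nabla u_h\|$ and hence the stated form $M_0=C_2(1+1/\hmin)$.
\end{itemize}
I expect the $\lambda=0$ Poincaré step, together with the uniform interpolation bound, to be the delicate parts.
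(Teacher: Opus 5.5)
Your architecture is the paper's: a discrete cutoff $\eta_\ell$ (your sum of $\mathbb{P}_1$ hat functions over the vertices of $P_\ell(D_h)$ is admissible), the test function $\NodalInt(\eta_\ell u_h)$, the flux identity $E_{D_h}(\ell)=-\sum_{K\in\Sigma_{\ell+1}}\int_K (A\nabla u_h\cdot\nabla v_h+\lambda^2u_hv_h)\,\mathrm{d}x$, and reference-element stability of $\NodalInt$ on products, as in \Cref{Lem:StabilityIhProductIh} and \Cref{Lem:PropCutoff}. Your case $\lambda>0$ is correct. Cauchy--Schwarz on the layer together with $\hmin^{-2}\|u_h\|^2\le(\lambda\hmin)^{-2}\lambda^2\|u_h\|^2$ is equivalent to the paper's Young step.

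The gap is the Poincaré step for $\lambda=0$. The bound $\|u_h-c\|_{L^2(Z_{\ell+1})}\lesssim w\,\|\nabla u_h\|_{L^2(Z_{\ell+1})}$, with $w\sim h$ the layer width, is false. The layer $Z_{\ell+1}$ is a thin ring wrapped around $P_\ell(D_h)$, and its Poincaré constant scales with its diameter, not its thickness. On the annulus $R<|x|<R+w$, the function $x_1$ has mean zero and $\|x_1\|_{L^2}^2/\|\nabla x_1\|_{L^2}^2\approx R^2/2$. Had your bound held, you would get $\|\nabla v_h\|_{L^2(Z_{\ell+1})}\lesssim(1+w/\hmin)\|\nabla u_h\|_{L^2(Z_{\ell+1})}$, i.e.\ a contraction factor per layer independent of $h$ on quasi-uniform meshes. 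Iterating over $\sim\delta/h$ layers would then force $E_{D_h}(0)\to0$ as $h\to0$ for a physically fixed $D_h$. That is false whenever the limit solution has non-vanishing gradient on $D_h$, and it contradicts the observed $\widehat\rho\to1$. So the mechanism you give for the factor $1/\hmin$ is not the right one.

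The repair is what the paper does. Take $c$ to be the mean of $u_h$ over $Z_{\ell+1}$ and use $\|u_h-c\|_{L^2(Z_{\ell+1})}^2\le C_{\mathrm p}\sum_{K\in\Sigma_{\ell+1}}|u_h|_{H^1(K)}^2$ with a Poincaré constant $C_{\mathrm p}$ of the layer. Then apply Young's inequality with weight $\gamma=(\alpha/C_{\mathrm p})^{1/2}$ to the mixed term $\hmin^{-1}\|u_h\|_{a,K}\|u_h-c\|_{L^2(K)}$. This gives $M_0=C(1+\sqrt{C_{\mathrm p}}/\hmin)$. The factor $1/\hmin$ comes from $\|\nabla\eta_\ell\|_{L^\infty}$ and is never compensated. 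The factor $\sqrt{C_{\mathrm p}}$, of the order of the diameter of the layer, carries the length scale absorbed into $C_2$.

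Also, the case split should be whether $P_\ell(D_h)$ touches $\partial\Omega$, not whether the layer touches $\Gamma^C$. If it does not, $\eta_\ell\in\mathcal{P}_{k,0}(\mathcal{T}_h)$ is itself a test function, so $\widehat a_\Omega(u_h,\eta_\ell)=0$ and subtracting $c$ is legitimate as you argue. If it does, $\eta_\ell$ is nonzero on $\partial\Omega$ and you must take $c=0$. The Friedrichs inequality on the layer then replaces Poincaré; the layer meets $\partial\Omega$, where $u_h=u_{h0}$ vanishes.
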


The proof of \Cref{Thm:discreteSaintVenant} will be presented in \Cref{sec:ProofMainResults}.

\begin{Corollary}
    \label{cor:decayfactor}
    Let $\Dh\subset \Th$ be a suitable subset of elements. Then 
    \begin{equation}
        \label{eq:DecayFactor-rho}
        E_{\Dh}(\ell) \leq \rho E_{\Dh}(\ell+1), \quad 0 \leq \ell < \ellmax,
    \end{equation}
    with $\rho < 1$ depending on the regularity of the mesh, on the material parameters $\alpha$ and $\beta$, and also \(\lambda\) and the minimal element size \(\hmin\).
    Moreover, we obtain
    \begin{equation}
        \label{eq:DecayAndApriori}
        E_{\Dh}(0) 
        \leq \rho^{\ellmax} E_{\Dh}(\ellmax) 
        \leq  \rho^{\ellmax} \energynorm{u_h}^2_{\fulldomain} \; .
    \end{equation}
\end{Corollary}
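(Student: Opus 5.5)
The corollary follows directly from \Cref{Thm:discreteSaintVenant}; no new analysis is needed. I would proceed in three short steps.

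\textbf{Step 1: the one-step contraction \eqref{eq:DecayFactor-rho}.} Under \Cref{ass:DecayAssumption} exactly one of the two cases of \Cref{Thm:discreteSaintVenant} applies. I set
\begin{equation*}
    \rho \coloneqq \frac{M}{1+M}, \qquad M = M_\lambda \text{ if } \lambda>0, \quad M = M_0 \text{ if } \lambda = 0 .
\end{equation*}
Since $M>0$ is finite ($C_1,C_2>0$ and $\hmin>0$ on a fixed mesh), we have $\rho\in(0,1)$. Through $M$, the factor $\rho$ depends only on $\alpha$, $\beta$, the shape regularity, $\lambda$ and $\hmin$, as claimed. Inequality \eqref{eq:DecayFactor-rho} for $0\le\ell<\ellmax$ is then literally the statement of the theorem. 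Suitability of $\Dh$ is implicitly required by the theorem through the definition of $\ellmax$, so nothing further needs checking here.

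\textbf{Step 2: iterate.} I apply \eqref{eq:DecayFactor-rho} successively for $\ell=0,1,\dots,\ellmax-1$. Each application is admissible because each index satisfies $\ell<\ellmax$. Since all $E_{\Dh}(\ell)\ge 0$ and $\rho>0$, the inequalities chain, and induction gives $E_{\Dh}(0)\le \rho^{j}E_{\Dh}(j)$ for $j\le\ellmax$. Taking $j=\ellmax$ yields the first inequality in \eqref{eq:DecayAndApriori}.

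\textbf{Step 3: bound by the global energy.} $E_{\Dh}(\ellmax)$ is a sum of the nonnegative elementwise integrals $\int_K A\grad u_h\cdot\grad u_h+\lambda^2u_h^2\dintx$ over the subset $\Patch{\ellmax}{\Dh}\subset\Th$. Since $A\ge\alpha>0$, each integrand is nonnegative, so the sum is dominated by the sum over all of $\Th$. That full sum equals $\abs{u_h}_{a,\Omega}^2+\lambda^2\norm{u_h}_{L^2(\Omega)}^2=\energynorm{u_h}_\Omega^2$, because $u_h\in H^1(\Omega)$ and the elements partition $\Omega$ up to null sets.

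\textbf{Main obstacle.} There is no real obstacle. The only point to take care with is that the iteration indices stay strictly below $\ellmax$, so that the theorem applies at every step. One may additionally note that combining with \eqref{eq:APrioriEstimateDiscreteSolution} gives a bound in terms of the data.
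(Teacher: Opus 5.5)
Your proposal is correct and takes the same approach as the paper. You set $\rho = M/(1+M)$ with $M=M_\lambda$ or $M=M_0$ from \Cref{Thm:discreteSaintVenant}, apply the theorem $\ellmax$ times, and justify the last inequality directly by nonnegativity of the elementwise energy densities on $\Patch{\ellmax}{\Dh}\subset\Th$, whereas the paper cites \eqref{eq:APrioriEstimateDiscreteSolution}, which is only needed for the further bound in terms of the lifting, as you also note.
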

    In words this means: The more layers of cells we can form between $\Dh$ and the support of $u_{hg}$, the smaller the discrete energy in $\Dh$. The scaling is exponential in the number of layers $\ellmax$.
\begin{proof}[Proof of Cor.~\ref{cor:decayfactor}]
    Following \Cref{Thm:discreteSaintVenant} the contraction factor $\rho$ in \eqref{eq:DecayFactor-rho} is given by 
    \begin{equation*}
        \rho = \frac{M}{1 + M} < 1,
    \end{equation*}
    with $M = M_\lambda$ for $\lambda > 0$ and $M=M_0$ for $\lambda = 0$ defined in \eqref{Thm:discreteSaintVenant:A:equation} and \eqref{Thm:discreteSaintVenant:B:equation}, respectively.
    To obtain $\eqref{eq:DecayAndApriori}$ we apply \Cref{Thm:discreteSaintVenant} $\ellmax$ times and then use \eqref{eq:APrioriEstimateDiscreteSolution}. 
\end{proof}

Next, we show that our decay rates agree with the expected analytic decay rate in the limit \( h\to 0^+\). In that sense, we show that the discrete contraction factor obtained above is asymptotically consistent with the classical continuous decay rate as the mesh is refined. In particular, under quasi-uniform refinement, the discrete layer-wise decay converges to an exponential decay in the physical distance to the boundary.

\begin{Corollary}\label{cor:asymp}
    Let the assumptions of \cref{Thm:discreteSaintVenant} hold.
    Assume that the mesh sequence is quasi-uniform, i.e., 
    \(\hmin/h \to \theta \in (0,1]\), for \(h\to 0^+\).
    \begin{enumerate}[a),align=left,leftmargin=\parindent,labelwidth=\parindent,labelsep=*]
        \item \label{Thm:discreteSaintVenant:A:asymp} 
        If $\lambda > 0$, then
        \begin{equation}
            \Bigl(\frac{M_{\lambda}}{1+M_{\lambda}}\Bigr)^{1/h}
                \to \exp\Bigl({-\frac{2\lambda \theta}{C_1}}\Bigr),\qquad
                h \to 0^+.
        \end{equation}
        \item \label{Thm:discreteSaintVenant:B:asymp} 
        If $\lambda = 0$, then
        \begin{equation}
            \Bigl(\frac{M_{0}}{1+M_{0}}\Bigr)^{1/h}
                \to \exp\Bigl({-\frac{\theta}{C_2}}\Bigr),\qquad
                h \to 0^+.
        \end{equation}
    \end{enumerate}
\end{Corollary}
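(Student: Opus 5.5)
Since $M/(1+M) = 1 - 1/(1+M)$, the corollary is a pure limit computation and needs nothing about the finite element solution itself. I would take logarithms and study
\[
\frac{1}{h}\log\Bigl(\frac{M}{1+M}\Bigr) = -\frac{1}{h}\log\Bigl(1+\frac{1}{M}\Bigr),
\]
where $M$ stands for $M_\lambda$ or $M_0$. Both blow up as $h\to 0^+$: since $\hmin\le h\to 0$, we have $M_\lambda \sim C_1/(2\lambda\hmin)$ and $M_0\sim C_2/\hmin$. Hence $1/M\to 0$, and the elementary expansion $\log(1+x) = x + O(x^2)$ for $x\to 0$ gives
\[
\frac{1}{h}\log\Bigl(1+\frac1M\Bigr) = \frac{1}{hM}\bigl(1+O(1/M)\bigr).
\]

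It then remains to compute $\lim_{h\to 0^+} 1/(hM)$.

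In case a), I write
\[
hM_\lambda = C_1\Bigl(h + \frac{h}{2\lambda\hmin}\Bigr).
\]
Using the quasi-uniformity assumption $\hmin/h\to\theta$, the right-hand side tends to $C_1/(2\lambda\theta)$. Therefore $1/(hM_\lambda)\to 2\lambda\theta/C_1$.

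In case b), in the same way, $hM_0 = C_2(h + h/\hmin)\to C_2/\theta$, so $1/(hM_0)\to\theta/C_2$.

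Since $1/M\to 0$, the correction factor $1+O(1/M)$ tends to one in both cases. By continuity of the exponential, raising $M/(1+M)$ to the power $1/h$ gives exactly the limits $\exp(-2\lambda\theta/C_1)$ and $\exp(-\theta/C_2)$ claimed in the statement.

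There is no real obstacle here. The only points needing care are that $\theta>0$ is what guarantees finite nonzero limits, and that $\lambda$, $C_1$ and $C_2$ are fixed independently of $h$, as stated in \Cref{Thm:discreteSaintVenant}.

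Finally, as an interpretation, I would add a remark: the number of layers across a fixed physical distance $\delta$ scales like $\delta/h$. So $\rho^{\ellmax}$ behaves like an exponential in the physical distance, which is what matches the continuous rate $e^{-2\lambda(1-s)}$ of \Cref{subsec:analyticalMotivation} up to the constant $C_1$.
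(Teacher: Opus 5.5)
Your proof is correct and takes essentially the same route as the paper. Both arguments isolate $1-\rho = 1/(1+M)$, use the first-order expansion of the logarithm, and identify the leading coefficient $2\lambda/C_1$ (resp.\ $1/C_2$) of $1/M$ in $\hmin$. The only cosmetic difference is where quasi-uniformity enters: the paper first establishes $\rho^{1/\hmin}\to e^{-2\lambda/C_1}$ and converts to the exponent $1/h$ via $\hmin/h\to\theta$ at the very end, whereas you build it directly into $hM_\lambda\to C_1/(2\lambda\theta)$ (resp.\ $hM_0\to C_2/\theta$).
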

\begin{proof}[Proof of Cor.~\ref{cor:asymp}]
    Let \(\rho = \frac{M_{\lambda}}{1+M_{\lambda}}\).
    We only prove \labelcref{Thm:discreteSaintVenant:A:asymp} since \labelcref{Thm:discreteSaintVenant:B:asymp} follows analogously with different coefficients.
    We determine the leading \(\hmin\)-coefficient of the map \(\hmin\mapsto 1- \rho\) as 
    \begin{equation*}
        1 - \rho 
            = \frac{2\lambda \hmin}{ C_1} \frac{1}{1 + 2\lambda(C_1^{-1}+ 1) \hmin} .
    \end{equation*}
    Therefore, 
    \begin{equation*}
         \frac{1}{\hmin}(1-\rho) \to \frac{2\lambda}{C_1},\quad \text{for} \quad \hmin\to 0^+,
    \end{equation*}
    where we used the expansion \((1+\alpha \hmin)^{-1} = 1 + \mathcal{O}(\hmin)\), \(\hmin\to 0^+\), for any \(\alpha > 0\) in the second term. 
    Thus, the expansion yields
    \begin{equation*}
        \rho = 1 - \frac{2\lambda \hmin}{C_1} + \mathcal{O}(\hmin^2),\qquad \hmin\to 0^+.
    \end{equation*}
    With the expansion
    \(
        \ln(1+x) = x + \mathcal{O}(x^2), \quad \text{for} \quad |x|\to 0,
    \)
    we obtain
    \begin{equation*}
        \ln(\rho^{1 /\hmin})
            = \frac{1}{\hmin} \ln(\rho)
            = -\frac{2\lambda}{C_1} + \mathcal{O}(\hmin) \to -\frac{2\lambda}{C_1},\qquad \hmin\to 0^+.
    \end{equation*}
    Finally, the claim is a result of the quasi-uniformity. 
\end{proof}

\begin{figure}
    \begin{center}
        \includegraphics{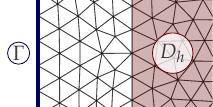}
        \includegraphics{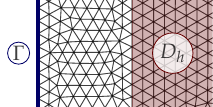}
    \end{center}
    \caption{An interior set $\Dh$ (red) is separated from the inhomogeneous boundary $\Gamma$ (blue) by several layers of mesh elements (white).
    Left: coarse mesh with relatively few admissible layers.
    Right: refined mesh with smaller mesh size $h$, allowing more layers between $\Dh$ and $\Gamma$.
    For fixed geometric distance $\delta = \inf_{x\in \Dh} \operatorname{dist}(x,\Gamma)$, the number of layers scales as $\ell_{\max} \sim \delta/h$.}
    \label{Fig:GeometricInterpretation}
\end{figure}

\begin{Remark}[Geometric interpretation]\label{Rem:Interpretation-rho-scaling}
    Corollary~\ref{cor:asymp} allows for a qualitative geometric interpretation.
    Given an initial patch \(\Dh\) with distance \(\delta = \inf_{x\in \Dh} \operatorname{dist}(x,\Gamma)\) to the boundary, the maximal number of possible layers \eqref{eq:MaxPatchSize} increases with the mesh size in the limit \(h\to 0^+\), i.e., 
    \(
        \ellmax \sim \delta / h.
    \)
    Figure~\ref{Fig:GeometricInterpretation} illustrates this relation between mesh layers and geometric distance.
    Hence, we obtain with Corollaries~\ref{cor:decayfactor} and \ref{cor:asymp} in the limit the estimate
    \begin{equation*}
        E_{\Dh}(0) 
            \lesssim \rho^{\ellmax} E_{\Dh}(\ellmax)
            \lesssim e^{- c \delta} \|g\|_{H^{s^\ast}(\Gamma)}^2,
    \end{equation*}
    where $c = 2\theta\lambda / C_1$ for $\lambda > 0$, and 
    $c = \theta/ C_2$ for $\lambda = 0$, respectively.
    Thus, the energy norm of the solution decays exponentially in the distance of the patch to the boundary.
    This is precisely a discrete version of the underlying analytic principle.
\end{Remark}

\bigskip 

\section{Smooth cutoff on patches}
\label{sec:Cutoff}
To evaluate the discrete energy locally on patches, we need a mechanism that allows us to restrict the discrete solution to a given patch without leaving the finite element space.
To this end, we multiply the solution with a suitable cutoff function and interpolate the resulting product back into $\PkTh$.
In order to use this construction in the analysis, we require stability of the nodal interpolation operator when applied to products of discrete functions.
We therefore briefly recall the definition of the Lagrange interpolation operator and collect the stability property needed in the sequel.

\subsection{Nodal interpolation}

Recall from \cref{subsec:FEM} the definition of the Lagrange element 
$\bigl(K, \PkK, \mathcal{N}_K\bigr)$.
Let $v \in C(K, \R)$.
We define the local interpolant by
\begin{equation*}
    \label{eq:LocalInt}
    \LocInt v =  \sum_{i=1}^{N_k^d} \sigma_{K,i}(v) \varphi_{a_{K,i}} = \sum_{i=1}^{N_k^d} v(a_{K,i}) \varphi_{a_{K,i}},  
\end{equation*}
where $\varphi_{a_{K,i}}$ denotes the local shape function associated with the nodal point $a_{K,i}$.
We can then further retrieve the standard Lagrange nodal interpolation for $v\in C^0(\overline{\fulldomain})$ by 
\begin{equation}
    \label{eq:NodalInt}
    \restr{\big(\NodalInt v \big)}{K} = \LocInt v, \qquad \forall K \in \Th.
\end{equation}

\begin{Lemma}
    \label{Lem:StabilityIhProductIh}
    Let $K \in \Th$. For $m=0,1$ there exists a constant $\CInth$ uniform in $K$ and $h$, s.t.
    \begin{equation*}
        \abs{\NodalInt(\phi_h \psi_h)}_{H^{m}(K)} \leq \CInth \abs{\phi_h \psi_h}_{H^m(K)},
    \end{equation*}
    for all $\phi_h, \psi_h \in \PkTh$. The constant $\CInth$ depends on the polynomial degree $k$, the spatial dimension $n$ and the shape regularity of the mesh $\Th$.
\end{Lemma}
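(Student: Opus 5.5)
We argue by a scaling argument to the reference element combined with equivalence of norms on a finite-dimensional space. Fix $K\in\Th$ with affine map $F_K:\widehat K\to K$, $F_K(\widehat x)=B_K\widehat x+b_K$. For $\phi_h,\psi_h\in\PkTh$ the restriction $w:=\restr{(\phi_h\psi_h)}{K}$ lies in $\mathbb{P}_{2k}(K)$, and $\LocInt w$ depends only on the values of $w$ at the nodes of $K$. Pulling back, $\widehat w = w\circ F_K\in\mathbb{P}_{2k}(\widehat K)$ and $\widehat{\LocInt w}=\LocIntRef\widehat w$, because affine maps send Lagrange nodes to Lagrange nodes and $\mathbb{P}_k$ to $\mathbb{P}_k$. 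The point is that we never need stability of $\LocIntRef$ on all of $C^0(\widehat K)$, only on the finite-dimensional space $\mathbb{P}_{2k}(\widehat K)$.

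\textbf{Reference estimate.} $\LocIntRef:\mathbb{P}_{2k}(\widehat K)\to\mathbb{P}_k(\widehat K)$ is linear on a finite-dimensional space, so it is bounded in any norm.

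For $m=0$ this gives $\norm{\LocIntRef\widehat w}_{L^2(\widehat K)}\le \widehat c_0\norm{\widehat w}_{L^2(\widehat K)}$.

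For $m=1$ we need the seminorm. If $\widehat w$ is constant, then $\LocIntRef\widehat w=\widehat w$, since constants lie in $\mathbb{P}_k$. Hence $\LocIntRef$ maps the kernel of $\abs{\cdot}_{H^1(\widehat K)}$ into itself and induces a linear map on the quotient $\mathbb{P}_{2k}(\widehat K)/\mathbb{P}_0$. On this quotient $\abs{\cdot}_{H^1(\widehat K)}$ is a norm, and finite dimensionality gives $\abs{\LocIntRef\widehat w}_{H^1(\widehat K)}\le \widehat c_1\abs{\widehat w}_{H^1(\widehat K)}$. Concretely, write $\LocIntRef\widehat w-\bar w=\LocIntRef(\widehat w-\bar w)$ with $\bar w$ the mean value, and use that $\abs{\cdot}_{H^1}$ and $\norm{\cdot}_{H^1}$ are equivalent on mean-free polynomials of degree $\le 2k$.

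\textbf{Transfer to $K$.} The standard scaling relations are
\[
\abs{v}_{H^m(K)}\le C\norm{B_K^{-1}}^m\abs{\det B_K}^{1/2}\abs{\widehat v}_{H^m(\widehat K)},\qquad
\abs{\widehat v}_{H^m(\widehat K)}\le C\norm{B_K}^m\abs{\det B_K}^{-1/2}\abs{v}_{H^m(K)}.
\]
Combining them with the reference estimate yields the claim with constant $C\,\widehat c_m\,(\norm{B_K}\norm{B_K^{-1}})^m$. Shape regularity gives $\norm{B_K}\norm{B_K^{-1}}\le h_K/\rho_K\le\sigma$. The resulting $\CInth$ therefore depends only on $k$, the dimension, and the shape-regularity parameter, and is uniform in $K$ and $h$.

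\textbf{Main obstacle.} The only delicate point is the $m=1$ seminorm bound, since nodal interpolation is not $H^1$-stable on general functions. It is handled exactly by two facts: the argument $\phi_h\psi_h$ is a polynomial of bounded degree $2k$, and constants are reproduced by $\LocIntRef$. These together allow the finite-dimensional quotient argument above.
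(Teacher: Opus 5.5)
Your proof is correct and is essentially the paper's argument. You pull back to $\widehat K$ and use that $\phi_h\psi_h|_K\in\mathbb{P}_{2k}$, so only a finite-dimensional space is involved. You then use that $\LocIntRef$ reproduces constants to get the $H^1$-seminorm bound (the paper phrases this as boundedness of $Id-\LocIntRef$ in $\abs{\cdot}_{H^m}$ plus the triangle inequality, you via the quotient by constants), and you transfer back with the affine scaling and the shape-regularity bound on $\norm{B_K}\norm{B_K^{-1}}$. One cosmetic slip: that bound should read $\norm{B_K}\norm{B_K^{-1}}\le (\widehat h/\widehat\rho)\,h_K/\rho_K$, with a reference-element factor your inequality omits; this does not affect the conclusion.
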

\begin{proof}
    Let $m=0$ or $m=1$ and $\abs{\cdot}_{H^{0}} = \norm{\cdot}_{L^2}$.
    The argument is carried out on the reference element $\widehat{K}$. 
    The map 
    \begin{equation*}
        Id - \LocIntRef : (\mathbb{P}_{2k}(\widehat{K}), \abs{\cdot}_{H^m}) \to (\mathbb{P}_{2k}(\widehat{K}), \abs{\cdot}_{H^m})
    \end{equation*}
    is linear and bounded since $\dim (\mathbb{P}_{2k}(\widehat{K})) < \infty$ and the constant functions are in the kernel of the operator.
    Thus, it holds for \(\widehat{\phi}_h,\widehat{\psi}_h \in  \mathbb{P}_{k}(\widehat{K})\)
    \begin{equation*}
        \abs{\NodalInt(\widehat{\phi}_h \widehat{\psi}_h)}_{H^{m}(\widehat{K})} 
            \leq \abs{\NodalInt(\widehat{\phi}_h \widehat{\psi}_h) - \widehat{\phi}_h \widehat{\psi}_h}_{H^{m}(\widehat{K})} + \abs{\widehat{\phi}_h \widehat{\psi}_h}_{H^{m}(\widehat{K})}
            \leq c_1 \abs{\widehat{\phi}_h \widehat{\psi}_h}_{H^{m}(\widehat{K})},
    \end{equation*}
    where \(c_1>0\) depends on \(k\), the reference element and the nodal set.
    Let $F_K: \widehat{K} \to K$ with $F_K(\widehat{x}) = B_K \widehat{x} + b_K$ be the affine map from the reference element $\widehat{K}$ to the physical element $K$.
    For any \(\phi_h,\psi_h\in \mathbb{P}_{k}(K)\), we define \(p = \phi_h\psi_h\) and \(\widehat{p} = p\circ F_K\).
    With the usual transformation properties, see, e.g., \cite[Theorem 3.1.2]{Cia02}, we conclude that
    \begin{align*}
        \abs{\NodalInt p}_{H^m(K)} 
        &\leq c_2 \bignorm{B^{-1}}^m \abs{\det B}^{\half} \abs{\LocIntRef \widehat{p}}_{H^m(\widehat{K})} \\
        &\leq c_2 \bignorm{B^{-1}}^m \abs{\det B}^{\half} c_1\abs{\widehat{p}}_{H^m(\widehat{K})} \\
        &\leq c_1 c_2^2 \bignorm{B}^m \bignorm{B^{-1}}^m  \abs{p}_{H^m(K)},
    \end{align*}
    with a constant \(c_2> 0\) depending on the dimension and \(m\). 
    Furthermore, the product \(\bignorm{B}\bignorm{B^{-1}}\) is bounded uniformly by the shape regularity, see, e.g., \cite[Theorem 3.1.3]{Cia02}. This proves the claim.
\end{proof}

\subsection{Discrete cutoff}

Recall the definition of patches $\Patch{\ell}{\Dh}$ in \eqref{eq:defPatches}. 
Similar to \cite[Definition 3.3]{MalP14} we define for each $\Patch{\ell}{\Dh}$ a discrete cutoff function $\Cutoff \in \PkTh$ %
with 
\begin{subequations}
    \label{eq:AssCutoff}
    \begin{alignat}{2}
         \label{eq:AssCutoff:A} 0 \leq \Cutoff &\leq 1, &&\text{on } \fulldomain, \\
        \label{eq:AssCutoff:B} \Cutoff &= 0, &&\text{on } \overline{K} \text{ for } K \in \Th \setminus \Patch{\ell+1}{\Dh}, \\
        \label{eq:AssCutoff:C} \Cutoff &= 1, &&\text{on } \overline{K} \text{ for } K \in \Patch{\ell}{\Dh}, \\
        \label{eq:AssCutoff:D} \bignorm{\grad \Cutoff}_{L^{\infty}(\fulldomain)} &\leq \CCutoff \hmin^{-1} \; ,
    \end{alignat}
\end{subequations}
with a constant $\CCutoff>1$ uniform in $\hmin$. 

By combining this cutoff function with the element-wise nodal interpolation $\NodalInt$ defined in \eqref{eq:NodalInt} we define the following discrete cutoff operator, which will play a crucial role in the proof of \Cref{Thm:discreteSaintVenant}.
    We define
    \begin{equation}
        \label{eq:DefIntCutoff}
        \IntCutoff: \PkTh \to \PkTh, \quad v_h \mapsto \NodalInt( \Cutoff v_h).
    \end{equation}

Next, we gather important properties of $\IntCutoff$ in the following Lemma. 

\begin{Lemma} 
    \label{Lem:PropCutoff}
    Let $v_h \in \PkTh$. Then, it holds:
    \begin{enumerate}[a),align=left,leftmargin=\parindent,labelwidth=\parindent,labelsep=*]
        \item \label{Lem:PropCutoff:A} In $K \in \Patch{\ell}{\Dh}$,
        \begin{equation*}
            \label{eq:P1IntCutoff}
            \restr{\IntCutoff v_h}{K} = \restr{v_h}{K} \; .
        \end{equation*}
        \item \label{Lem:PropCutoff:B} In $K \in \Th \setminus \Patch{\ell + 1}{\Dh}$,
        \begin{equation*}
             \label{eq:P2IntCutoff}
            \restr{\IntCutoff v_h}{K} = 0 \; . 
        \end{equation*}
        \item \label{Lem:PropCutoff:C} In $K \in \bLayer{\ell+1}$, 
        \begin{equation*}
            \label{eq:P3IntCutoff}
            \int_{K} v_h \IntCutoff v_h \dintx 
            \leq \CInth \norm{v_h}_{L^2(K)}^2 \; . 
        \end{equation*}
        \item \label{Lem:PropCutoff:D} In $K \in \bLayer{\ell+1}$, 
        \begin{equation*}
            \label{eq:P4IntCutoff}
            \int_{K} A \grad v_h \cdot \grad (\IntCutoff v_h) \dintx
            \leq \CInth \beta^{\half} \norm{v_h}_{a,K} 
        \Bigl( \CCutoff \hmin^{-1} \norm{v_h}_{L^2(K)} + \mfrac{1}{\alpha^{\half}}\norm{v_h}_{a,K}  \Bigr) \; .
        \end{equation*}
    \end{enumerate}
\end{Lemma}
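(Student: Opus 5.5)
Parts \ref{Lem:PropCutoff:A} and \ref{Lem:PropCutoff:B} follow from the nodal structure of $\NodalInt$. On an element $K$, $\restr{\IntCutoff v_h}{K} = \LocInt(\Cutoff v_h)$ depends only on the values of $\Cutoff v_h$ at the nodes $a_{K,i}\in \overline{K}$. If $K\in\Patch{\ell}{\Dh}$, then \eqref{eq:AssCutoff:C} gives $\Cutoff=1$ on $\overline K$, so these nodal values are $v_h(a_{K,i})$. Hence $\LocInt(\Cutoff v_h)=\LocInt v_h = \restr{v_h}{K}$, since $\LocInt$ reproduces $\PkK$. If $K\notin\Patch{\ell+1}{\Dh}$, then \eqref{eq:AssCutoff:B} makes all nodal values vanish, so the interpolant is zero on $K$.

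For \ref{Lem:PropCutoff:C}, fix $K\in\bLayer{\ell+1}$. I would apply Cauchy--Schwarz to get $\int_K v_h\IntCutoff v_h \le \norm{v_h}_{L^2(K)}\norm{\NodalInt(\Cutoff v_h)}_{L^2(K)}$. Then I would use \Cref{Lem:StabilityIhProductIh} with $m=0$, $\phi_h=\Cutoff$, $\psi_h=v_h$ (both lie in $\PkTh$), which gives $\norm{\NodalInt(\Cutoff v_h)}_{L^2(K)}\le \CInth\norm{\Cutoff v_h}_{L^2(K)}\le \CInth \norm{v_h}_{L^2(K)}$ by \eqref{eq:AssCutoff:A}.

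For \ref{Lem:PropCutoff:D}, I would write the integrand with the $A$-inner product and use Cauchy--Schwarz in it:
\[
\int_K A\grad v_h\cdot\grad(\IntCutoff v_h)\dintx \le \norm{v_h}_{a,K}\,\bigl(\textstyle\int_K A\grad(\IntCutoff v_h)\cdot\grad(\IntCutoff v_h)\bigr)^{1/2}\le \norm{v_h}_{a,K}\,\beta^{\half}\abs{\NodalInt(\Cutoff v_h)}_{H^1(K)}.
\]
\Cref{Lem:StabilityIhProductIh} with $m=1$ bounds the last factor by $\CInth\abs{\Cutoff v_h}_{H^1(K)}$. By the product rule,
\[
\abs{\Cutoff v_h}_{H^1(K)}\le \norm{\grad\Cutoff}_{L^\infty}\norm{v_h}_{L^2(K)}+\norm{\Cutoff}_{L^\infty}\abs{v_h}_{H^1(K)}.
\]
The first term is at most $\CCutoff\hmin^{-1}\norm{v_h}_{L^2(K)}$ by \eqref{eq:AssCutoff:D}. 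The second is at most $\abs{v_h}_{H^1(K)}\le\alpha^{-\half}\norm{v_h}_{a,K}$ by \eqref{eq:AssCutoff:A} and ellipticity. Combining these gives exactly the stated bound.

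There is no real obstacle here. The only point needing care is that $\norm{\cdot}_{a,K}$ is the local seminorm $\abs{\cdot}_{a,K}$, with the bounds $\alpha\le A\le\beta$ understood in the sense of quadratic forms. The stability lemma applies because $\Cutoff v_h$ is a product of two discrete functions, which is precisely why the cutoff was chosen in $\PkTh$.
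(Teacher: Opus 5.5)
Your proposal is correct and matches the paper's proof essentially step for step. Parts a) and b) use reproduction of $\PkK$ by $\LocInt$ together with \eqref{eq:AssCutoff:B}--\eqref{eq:AssCutoff:C}; part c) uses Cauchy--Schwarz with \Cref{Lem:StabilityIhProductIh} for $m=0$ and $0\le\Cutoff\le 1$; and part d) uses Cauchy--Schwarz in the $A$-inner product, the bound $A\le\beta$, \Cref{Lem:StabilityIhProductIh} for $m=1$, and the product rule with \eqref{eq:AssCutoff:D} and ellipticity.
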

\begin{proof}
    Let  $K \in \Patch{\ell}{\Dh}$. 
    We have $\Cutoff = 1$ on $\overline{K}$ by \eqref{eq:AssCutoff:C}. 
    Since $v_h \in \PkTh$ and \eqref{eq:NodalInt} this yields
    \begin{equation*}
        \restr{\IntCutoff v_h}{K} = \restr{\NodalInt(\Cutoff v_h)}{K} = \LocInt(\restr{v_h}{K}) = \restr{v_h}{K}\;, 
    \end{equation*}
    which shows \cref{Lem:PropCutoff}\ref{Lem:PropCutoff:A}.
    Next, let $K \in \Th\setminus \Patch{\ell+1}{D_h}$. Then we get
        \begin{equation*}
            \restr{\IntCutoff v_h}{K} = \restr{\NodalInt(\Cutoff v_h)}{K} = \LocInt(0) = 0 \; , 
        \end{equation*}
    since $\Cutoff = 0 $ on $\overline{K}$ by \eqref{eq:AssCutoff:B}, so \cref{Lem:PropCutoff}\ref{Lem:PropCutoff:B} holds.

    For the remaining part of the proof, let $K\in \bLayer{\ell + 1}$.
    Note, that
    \begin{equation}
        \label{eq:estL2term}
        \int_K v_h (\IntCutoff v_h) \dintx 
        \leq \norm{v_h}_{L^2(K)} \bignorm{\NodalInt (\Cutoff v_h)}_{L^2(K)} \; .
    \end{equation}
    By \Cref{Lem:StabilityIhProductIh} and \eqref{eq:AssCutoff:A} we get
    \begin{equation*}
        \bignorm{\NodalInt (\Cutoff v_h)}_{L^2(K)} 
        \leq \CInth \bignorm{\Cutoff v_h }_{L^2(K)} 
        \leq \CInth \bignorm{\Cutoff}_{L^{\infty}(K)} \norm{v_h }_{L^2(K)} \leq  \CInth \norm{v_h}_{L^2(K)} .
    \end{equation*}
    Combining this with \eqref{eq:estL2term} shows \cref{Lem:PropCutoff}\ref{Lem:PropCutoff:C}.

    Similarly, using the bound on $\bignorm{\grad \Cutoff}_{L^{\infty}(\fulldomain)}$ from \eqref{eq:AssCutoff:D} and \Cref{Lem:StabilityIhProductIh} we derive
    \begin{align*}
        \int_K A \grad v_h \grad ( \IntCutoff v_h) \dintx 
        &\leq \norm{v_h}_{a,K} \norm{\NodalInt(\Cutoff v_h)}_{a,K} \\
        &\leq \beta^{\half} \norm{v_h}_{a,K} \abs{\NodalInt \big(\Cutoff v_h\big)}_{H^1(K)} \\
        &\leq \CInth \beta^{\half} \norm{v_h}_{a,K} \abs{\Cutoff v_h}_{H^1(K)}\; .
    \end{align*}
    Further, by the product rule we have 
    \begin{align*}
        \abs{\Cutoff v_h}_{H^1(K)} 
        &\leq \norm{\grad \Cutoff}_{L^{\infty}(K)} \bignorm{v_h}_{L^2(K)}  + \bignorm{\Cutoff}_{L^{\infty}(K)} \norm{\grad v_h}_{L^2(K)} \\
        &\leq \CCutoff \hmin^{-1} \norm{v_h}_{L^2(K)} + \mfrac{1}{\alpha^{\half}}\norm{v_h}_{a,K},
    \end{align*}
    which concludes the proof.
\end{proof}

\bigskip

\section{Proof of \texorpdfstring{\Cref{Thm:discreteSaintVenant}}{the main result}}
\label{sec:ProofMainResults}

This section contains the proof of \Cref{Thm:discreteSaintVenant}. 
The argument is structured in two steps.

First, we show that the energy $E_{\Dh}(\ell) $ on the patch $\Patch{\ell}{\Dh}$ defined in \eqref{eq:DefEnergyOnPatches} can be represented as a flux across the outer layer $\bLayer{\ell+1}$.
Second, we bound this flux by the difference between the energies on two
consecutive patches, $\Patch{\ell+1}{\Dh}$ and $\Patch{\ell}{\Dh}$.
Combining both steps yields a contraction estimate
\begin{equation}\label{eq:ToBeRearanged}
    E_{\Dh}(\ell) \leq M \bigl(E_{\Dh}(\ell+1) -  E_{\Dh}(\ell) \bigr),
\end{equation}
where the constant $M>0$ depends explicitly on the parameter $\lambda$,
the minimal mesh size $h_{\min}$, the bounds on the material coefficient
$\alpha, \beta$, and the shape-regularity of the mesh.
The proof tracks these dependencies throughout. 
We treat the cases $\lambda>0$ and $\lambda=0$ separately.
Rearranging \eqref{eq:ToBeRearanged} then gives \eqref{Thm:discreteSaintVenant:A:equation} and \eqref{Thm:discreteSaintVenant:B:equation}, respectively.

\subsection{Strictly positive \texorpdfstring{$\lambda > 0$}{parameter}}

\begin{proof}[Proof of \cref{Thm:discreteSaintVenant}\ref{Thm:discreteSaintVenant:A}]
    Recall, that $u_{h0} = u_h - u_{hg} \in \PkZeroTh$ satisfies \eqref{eq:variationalFormulationDiscrete}. By assumption it holds that 
    $\ell < \ellmax$, and the definition of $\ellmax$ in \eqref{eq:MaxPatchSize} implies that 
    $\Patch{\ell + 1}{\Dh} \cap \supp u_{hg} = \emptyset$.
    Therefore, the energy on \(\Patch{\ell}{\Dh}\) is the same for both \(u_h\) and \(u_{h0}\), i.e., we define
    \begin{equation}
        \label{eq:DefEnergyOnPatchesZeroLambda}
        E_{\Dh,0}(\ell) \coloneqq   \sum_{K \in \Patch{\ell}{\Dh}}  \int_K A \grad u_{h0} \cdot \grad u_{h0} \dint{x} + \lambda^2 \int_K u_{h0} u_{h0} \dintx = E_{\Dh}(\ell).
    \end{equation}
    Let \(\IntCutoff\) be defined as in \eqref{eq:DefIntCutoff}.
    By \cref{Lem:PropCutoff}\ref{Lem:PropCutoff:A}, it holds that \(\restr{\IntCutoff u_{h0}}{K} =  u_{h0}\) for \(K\in\Patch{\ell}{\Dh}\).
    Thus, we can plug the cut-off function under the integral and obtain
    \begin{equation}
        \label{eq:RespresentationEnergyZeroLambda}
        E_{\Dh,0}(\ell)  = \sum_{K \in \Patch{\ell}{\Dh}}  \int_K A \grad u_{h0} \cdot \grad ( \IntCutoff u_{h0}) \dint{x} + \lambda^2 \int_K u_{h0} (\IntCutoff u_{h0}) \dintx .
    \end{equation}

    Again, by definition, the function $\IntCutoff u_{h0} \in \PkZeroTh$ is a valid test function for the variational formulation \eqref{eq:variationalFormulationDiscrete}, and we see that 
    \begin{equation}
        \label{eq:WeakFormEnergyZeroLambda}
        \abil[\fulldomain]{u_{h0}, \IntCutoff u_{h0}} + \lambda^2 (u_{h0}, \IntCutoff u_{h0})_\fulldomain 
        = b_h(\IntCutoff u_{h0}).
    \end{equation}
    From \(b_h(\IntCutoff u_{h0}) = -\widehat{a}_\Omega(u_{hg},\IntCutoff u_{h0}) - \lambda^2 (u_{hg},\IntCutoff u_{h0})_{\fulldomain}\) we conclude that
        \begin{equation}
            \label{lem:rhsZeroIfNoSupport}
               \supp \IntCutoff u_{h0}  \cap \supp u_{hg} = \emptyset
                \implies
                b_h(\IntCutoff u_{h0}) = 0.
        \end{equation}
   Hence, the right-hand side of $\eqref{eq:WeakFormEnergyZeroLambda}$ vanishes.

    This allows for subtraction of
    \eqref{eq:WeakFormEnergyZeroLambda} from \eqref{eq:RespresentationEnergyZeroLambda}, which yields together with Lem-ma~\ref{Lem:PropCutoff}\ref{Lem:PropCutoff:B}
    \begin{equation}
        \label{eq:EnergyZeroIsBlayerFluxLambda}
        \begin{aligned}
            E_{\Dh,0}(\ell)
            =& \sum_{K \in \Patch{\ell}{\Dh}}  \int_K A \grad u_{h0} \cdot \grad ( \IntCutoff u_{h0}) \dint{x} + \lambda^2 \int_K u_{h0} (\IntCutoff u_{h0}) \dintx \\
            &- \abil[\fulldomain]{u_{h0}, \IntCutoff u_{h0}} + \lambda^2 (u_{h0}, \IntCutoff u_{h0})_\fulldomain \\
            =& \sum_{K \in \Patch{\ell}{\Dh}}  \int_K A \grad u_{h0} \cdot \grad ( \IntCutoff u_{h0}) \dint{x} + \lambda^2 \int_K u_{h0} (\IntCutoff u_{h0}) \dintx \\
            &- \sum_{K \in \Th}  \int_K A \grad u_{h0} \cdot \grad ( \IntCutoff u_{h0}) \dint{x} + \lambda^2 \int_K u_{h0} (\IntCutoff u_{h0}) \dintx  \\
            =& - \sum_{K \in \bLayer{\ell+1}} \int_K A \grad u_{h0} \cdot  \grad ( \IntCutoff u_{h0}) \dint{x} + \lambda^2 \int_K u_{h0} (\IntCutoff u_{h0}) \dintx \; .
        \end{aligned}
    \end{equation}

    By assumption \eqref{eq:MaxPatchSize}, the lifting \(u_{hg}\) is unsupported in \(\bLayer{\ell+1}\), i.e., $\overline{K} \cap \supp u_{hg} = \emptyset$ for $K \in \bLayer{\ell + 1} $.
    Therefore, \(\restr{u_{h}}{K} = \restr{u_{h0}}{K}\) for any \(K\in \bLayer{\ell +1}\), and the equality in \eqref{eq:RespresentationEnergyZeroLambda} shows that
    \begin{equation}
        \label{eq:EnergyIsBlayerFluxLambda}
        E_{\Dh}(\ell) = E_{\Dh,0}(\ell )= - \sum_{K \in \bLayer{\ell+1}} \int_K A \grad u_{h} \cdot  \grad ( \IntCutoff u_{h}) \dint{x} + \lambda^2 \int_K u_{h} (\IntCutoff u_{h}) \dintx \;.
    \end{equation}
    Using \cref{Lem:PropCutoff}\ref{Lem:PropCutoff:C} and \cref{Lem:PropCutoff}\ref{Lem:PropCutoff:D} we derive the estimate 
    \begin{equation*}
        \label{eq:BlayerFluxEstimate}
        E_{\Dh}(\ell)  \leq \CInth \!  \sum_{K \in \bLayer{\ell+1}} \! \Big( \lambda ^2 \norm{u_h}_{L^2(K)}^2 \! +  \beta^{\half} \norm{u_h}_{a,K} \!
        \Bigl(\CCutoff \hmin^{-1} \norm{u_h}_{L^2(K)} + \mfrac{1}{\alpha^{\half}}\norm{u_h}_{a,K}  \Bigr) \! \Big) .
    \end{equation*}
    The occurring mixed term is estimated by Young's inequality 
    \begin{equation}\label{eq:UpperEstimateLambda}
        \sum_{K \in  \bLayer{\ell+1}}  \norm{u_h}_{a,K} \norm{u_h}_{L^2(K)}
        \leq \frac{1}{2 \lambda} \Bigl( \sum_{K \in  \bLayer{\ell+1}} \norm{u_h}_{a,K}^2  + \lambda^2 \sum_{K \in  \bLayer{\ell+1}}   \norm{u_h}_{L^2(K)}^2 \Bigr),
    \end{equation}
    which yields for \(\CCutoff > 1\)
    \begin{equation}\label{eq::EnergyPlBoundByDifferencePre}
         E_{\Dh}(\ell) 
         \leq \CInth \Bigl( \frac{\beta^{\half}}{\alpha^{\half}} +  \frac{ \beta^{\half}\CCutoff}{2\lambda \hmin} \Bigr) \Bigl( \sum_{K \in  \bLayer{\ell+1}} \norm{u_h}_{a,K}^2  + \lambda^2 \sum_{K \in  \bLayer{\ell+1}}   \norm{u_h}_{L^2(K)}^2 \Bigr).
    \end{equation}
    The right-hand side term in \eqref{eq::EnergyPlBoundByDifference} is now the difference between the energies, i.e.,
    \begin{equation}\label{eq:EnergyDifferences}
        \sum_{K \in  \bLayer{\ell+1}} \norm{u_h}_{a,K}^2  + \lambda^2 \sum_{K \in  \bLayer{\ell+1}}   \norm{u_h}_{L^2(K)}^2  = E_{\Dh}(\ell+1) - E_{\Dh}(\ell).
    \end{equation}
    Hence, this proves the claim
    \begin{align}
        \label{eq::EnergyPlBoundByDifference}
         E_{\Dh}(\ell)
        &\leq C_1 \big( 1 + \frac1{2\lambda \hmin} \big)\Bigl( E_{\Dh}(\ell+1) - E_{\Dh}(\ell)  \Bigr),
    \end{align}
    where the constant $C_1>0$ depends on $\CInth, \CCutoff, \alpha$, and $\beta$.
\end{proof}

\subsection{Vanishing \texorpdfstring{$\lambda = 0$}{parameter} }
We proceed with the proof for the case $\lambda = 0$, which is similar to the previous one. 
Additionally, we require a Poincaré inequality, e.g. \cite[Thm.~10.6.12]{BreS08}. 

\begin{proof}[Proof of \cref{Thm:discreteSaintVenant}\ref{Thm:discreteSaintVenant:B}]
    We distinguish again two cases: First, let $\overline{K}\cap \Gamma^C = \emptyset$ for all $K \in \Patch{\ell}{\Dh}$.
    Let $c\geq 0$ be a constant, which will be determined later.
    Since $\grad c = 0$, we proceed as in the proof of \Cref{Thm:discreteSaintVenant} and obtain
    \begin{equation}
        \label{eq:EnergyPlTwo}
        \begin{aligned}
            E_{\Dh,0} 
            \coloneqq \sum_{K \in \Patch{\ell}{\Dh}}  \int_K A \grad u_{h0} \grad (u_{h0}-c) \dint{x}
            = 
            E_{\Dh}(\ell)
        \end{aligned}
    \end{equation}
    With \cref{Lem:PropCutoff}\ref{Lem:PropCutoff:A} it holds that
    \begin{equation}\label{eq:NullEnergyEstimate}
        E_{\Dh,0} = \sum_{K \in \Patch{\ell}{\Dh}}  \int_K A \grad u_{h0} \grad (\IntCutoff(u_{h0}-c)) \dint{x}.
    \end{equation}
    Since $\overline{K}\cap \dfulldomain = \emptyset$ for all $K \in \Patch{\ell}{\Dh}$, the function $\IntCutoff(u_{h0} - c) \in \PkZeroTh$ is a valid test function and \eqref{eq:variationalFormulationDiscrete} shows that
    \begin{equation}
        \label{eq:WeakFormEnergyZeroNoLambda}
        \abil[\fulldomain]{u_{h0}, \IntCutoff (u_{h0}-c)}
        = b_h\bigl(\IntCutoff (u_{h0}-c)\bigr) = 0.
    \end{equation}
    The last equality is a consequence of $K \cap \supp u_{hg} = \emptyset$ for $K \in \bLayer{\ell + 1} $ since $\ell < \ellmax$. 
    
    Taking the difference between \eqref{eq:NullEnergyEstimate} and \eqref{eq:WeakFormEnergyZeroNoLambda} and repeating arguments above for \eqref{eq:EnergyZeroIsBlayerFluxLambda} and \eqref{eq:EnergyIsBlayerFluxLambda} shows that
    \begin{equation}
        \label{eq:FluxThroughInterfaceNoLambda}
        E_{\Dh}(\ell) = - \sum_{K \in  \bLayer{\ell+1}} \int_K A \grad u_{h} \grad ( \IntCutoff (u_{h}-c)) \dint{x}.
    \end{equation}
    Using \cref{Lem:PropCutoff}\ref{Lem:PropCutoff:D} and  Young's inequality with $\gamma > 0$ shows that
    \begin{equation}
        \label{eq:estimateFluxInterfaceNoLambda}
        E_{\Dh}(\ell) \leq \CInth \beta^{\half} \Bigl( \sum_{K \in \bLayer{\ell + 1}}  \! \bigl( \frac{1}{\alpha^{\half}} + \frac{\CCutoff}{2\gamma \hmin}  \bigr) \! \norm{u_h}^2_{a,K} + \frac{\gamma \CCutoff}{2 \hmin} \int_{Z_{\ell + 1}}\!|u_h - c|^2\dint{x}\Bigr), \!
    \end{equation}
    where $Z_{\ell+1} = \interior \bigl( \bigcup_{K \in \bLayer{\ell + 1}} \overline{K} \bigr)$.
    We choose \(c=\int_{Z_{\ell+1}} u_h \dintx\) and obtain with the Poincaré inequality, see for example \cite[Thm.~10.6.12, eq. (10.6.13)]{BreS08}, that
    \begin{equation}
        \label{eq:Poincare}
        \int_{Z_{\ell+1}} |u_h -c|^2 \dint{x}
        \leq C_{\mathrm{p}} \sum_{K \in \bLayer{\ell +1 }} |u_h|_{H^1(K)}^2 
        \leq \frac{C_{\mathrm{p}}}{\alpha} \sum_{K \in \bLayer{\ell +1 }} \norm{u_h}_{a,K}^2 ,
    \end{equation}
    with a constant \(C_{\mathrm{p}}>0\) that depends on the smallest angle of all elements \(K\) comprising \(\bLayer{\ell+1}\). 
    Thus, from \eqref{eq:estimateFluxInterfaceNoLambda} we obtain
    \begin{equation*}
        E_{\Dh}(\ell) 
        \leq \CInth \frac{\beta^{\half}}{\alpha^{\half}} \Bigl(  1 + \frac{\CCutoff \alpha^{\half}}{2 \gamma \hmin} + \frac{\gamma \CCutoff C_{\mathrm{p}}}{2 \hmin \alpha^{\half}}\Bigr)
        \sum_{K\in \bLayer{\ell+1}} \norm{u_h}^2_{a,K},
    \end{equation*}
    which yields by choosing \(\gamma = \bigl( \alpha / C_{\mathrm{p}} \bigr)^{\half}\) and repeating the argument from above for \eqref{eq:UpperEstimateLambda}-\eqref{eq:EnergyDifferences} 
    \begin{equation}
        E_{\Dh}(\ell) \leq \CInth \frac{\beta^{\half}}{\alpha^{\half}} \Bigl(  1 +  \frac{ \CCutoff  \sqrt{ C_{\mathrm{p}}}}{\hmin}\Bigr)  \bigl( E_{\Dh}(\ell+1) - E_{\Dh}(\ell)\bigr). 
    \end{equation}
    The claim follows for this case directly with $C_2 = \CInth \CCutoff \beta^{\half} \alpha^{-\half} \sqrt{C_{\mathrm{p}}}$.

    In the second case, the patch $\Patch{\ell}{\Dh}$ touches the homogeneous Dirichlet boundary, i.e., $\overline{K}_{\star} \cap \Gamma^C \neq \emptyset$ for some $K_{\star} \in \Patch{\ell}{K}$. 
    Here, we set $c = 0$ in above derivation and replace \eqref{eq:Poincare} using \cite[Thm.~10.6.12, eq. (10.6.14)]{BreS08} with 
    \begin{equation}
        \label{eq:PoincareZero}
        \norm{u_h}_{L^2(Z_{\ell + 1})}^2
        \leq \frac{C_{\mathrm{p},0}}{\alpha} \sum_{K \in \bLayer{\ell +1 }} \norm{u_h}^2_{a,K}, 
    \end{equation}
    where $C_{\mathrm{p},0}$ depends only on the shape regularity of $\bLayer{\ell + 1}$.
    The rest of the proof follows analogously with $C_2 = \CInth \CCutoff \beta^{\half} \alpha^{-\half} \sqrt{C_{\mathrm{p},0}}$.
\end{proof}

\begin{Remark}[Localized right-hand sides]
The mechanism behind \Cref{Thm:discreteSaintVenant} relies on spatial
separation between inhomogeneous data and the region where the energy is
measured rather than on boundary conditions themselves. Consequently,
analogous decay estimates hold for problems with homogeneous boundary
conditions and spatially localized right-hand sides, since the discrete
solution satisfies a homogeneous equation outside the support of the data.

This recovers exponential energy decay estimates used in localized
orthogonal decomposition and related multiscale methods; see, e.g.,
\cite[Theorem~4.1]{MalP21}, \cite[Corollary 3.7]{AltHP20}, and \cite[Lemma~3.1]{GalMa23}. Hence, boundary-
and source-driven decay arise from the same locality property of the
discrete elliptic operator.
\end{Remark}

\begin{Remark}
While the construction of the discrete cutoff operator from \Cref{sec:Cutoff} reminds of techniques used in the analysis of multiscale methods — in particular in the localization of corrector functions within the LOD framework, see for example \cite{MalP14} and \cite[Chapter~4]{MalP21} — the present approach differs in a fundamental way.
No multiscale decomposition or orthogonality property is invoked.
Instead, the localization mechanism is applied directly to the full conforming finite element solution, while using the discrete product estimate of the nodal interpolation.
From a conceptual point of view, the proof of \Cref{Thm:discreteSaintVenant} presented in \Cref{sec:ProofMainResults} is closer to the classical energy-based decay arguments of Mieth and Horgan, translated to the discrete setting.    
\end{Remark}

\bigskip

\section{Application: Discrete Parallel Schwarz Method}
\label{sec:Applications}

The discrete decay estimate of \Cref{Thm:discreteSaintVenant} provides a
natural tool for studying the propagation of interface information in
iterative domain decomposition methods. In particular, it allows us to
analyze the convergence of a parallel Schwarz iteration for a finite
element discretization of an elliptic boundary value problem.

To this end, we consider the Laplace problem on a bounded polygonal domain $\fulldomain \subset \R^2$,
\begin{equation}
    \begin{aligned}
        \label{eq:globalProblemParallelSchwarz}
        - \laplace u &= f,  &&\text{in } \fulldomain, \\
        u &= 0 , &&\text{on } \dfulldomain.
    \end{aligned}
\end{equation}
Rather than solving \eqref{eq:globalProblemParallelSchwarz} directly on $\fulldomain$, we introduce overlapping subdomains $\fulldomain_1$ and $\fulldomain_2$, together with their interfaces 
\[
    \Gamma_i \coloneqq \dfulldomain_i \cap \fulldomain, \qquad i = 1,2.
\]
We then solve iteratively, for $n \geq 0$,
\begin{equation}
    \label{eq:SubproblemsParallelSchwarz}
    \begin{aligned}
        - \laplace u^{1}_{n+1} &= f,  &&\text{in } \fulldomain_1, \hspace*{2cm} 
        &- \laplace u^{2}_{n+1} &= f,  &&\text{in } \fulldomain_2, \\
        u^{1}_{n+1} &= 0 , &&\text{on } \dfulldomain \cap \dfulldomain_1,
        &u^{2}_{n+1} &= 0 , &&\text{on } \dfulldomain \cap \dfulldomain_2, \\
        u^{1}_{n+1} &=  u^{2}_{n} , &&\text{on } \Gamma_1,
        &u^{2}_{n+1} &=  u^{1}_{n} , &&\text{on } \Gamma_2 .
    \end{aligned}
\end{equation} 
This iterative scheme is known as the parallel Schwarz method and dates back to~\cite{Lions87}.
As described in \eqref{eq:SubproblemsParallelSchwarz}, the method is typically formulated at the continuous level, although it is ultimately combined with a spatial discretization.

The convergence analysis of this method, as well as of many related domain decomposition schemes, is often carried out either continuously in space, that is, prior to the application of a spatial discretization (see, e.g., \cite[Section~2]{GanZh22}), or after discretization, at the matrix level (see, e.g., \cite[Chapter~2]{TosWi05}).  
In contrast, \Cref{Thm:discreteSaintVenant} provides another  approach to proving the convergence of a finite element discretization of \eqref{eq:SubproblemsParallelSchwarz}, which incorporates the spatial discretization directly.

For simplicity, we consider a rectangular domain \(\fulldomain\) with two fixed overlapping subdomains $\fulldomain_1$ and $\fulldomain_2$, and denote by $\ell_{\mathrm{ov}}$ the number of mesh layers in the overlap; see \Cref{Fig:OvDecomp} for an illustration.
\begin{figure}[t]
    \centering
    \includegraphics{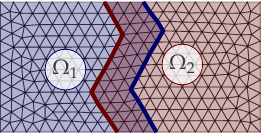}
    \hspace{0.5cm}
    \includegraphics{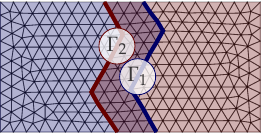}
    \caption{Left: Example decomposition of a rectangular domain into two overlapping subdomains $\fulldomain_1$ and $ \fulldomain_2$ with $\ell_{\mathrm{ov}}=3$. Right: Interfaces $\Gamma_1$ and $\Gamma_2$ of the subdomains.}
    \label{Fig:OvDecomp}
\end{figure}

The finite element discretization of \eqref{eq:SubproblemsParallelSchwarz} then iteratively seeks a sequence of solutions
\begin{subequations}\label{eq:SubproblemsDiscreteParallelSchwarz}
\begin{equation}
    \bigl(u_{h,n}^{1}, u_{h,n}^{2}\bigr) \in \PkTh[\fulldomain_1]\times\PkTh[\fulldomain_2],\qquad n\geq 0, 
\end{equation}
solving the two problems
\begin{equation}
    \begin{aligned}
        a_{\fulldomain_1}(u^{1}_{h,n+1}, \varphi_h^{1}) 
            &= \ip[\fulldomain_1]{f, \varphi_h^{1}},
            \quad\text{for all } \varphi_h^{1} \in \PkZeroTh[\fulldomain_1]\\
        \restr{u^{1}_{h,n+1}}{\Gamma_1} &= \restr{u_{h,n}^{2}}{\Gamma_1},\\
        a_{\fulldomain_2}(u^{2}_{h,n+1}, \varphi_h^{2}) 
            &= \ip[\fulldomain_2]{f, \varphi_h^{2}},
            \quad\text{for all } \varphi_h^{1}\in \PkZeroTh[\fulldomain_2]\\
        \restr{u^{2}_{h,n+1}}{\Gamma_2} &= \restr{u_{h,n}^{1}}{\Gamma_2},\\
    \end{aligned}
\end{equation}
\end{subequations}
with some initial values \(u_{h,0}^{1}\) and \(u_{h,0}^{2}\).
Note, that we do not rely on any specific lifting of the boundary datum.
The usual nodal lifting is sufficient and preferred for implementation.

We aim to prove that our iteration is a contraction when compared to a global discretization of \eqref{eq:globalProblemParallelSchwarz}.
Hence, we define \(u_h\in \PkZeroTh\) as the solution of
\begin{equation}\label{eq:globalDiscreteProblemParallelSchwarz}
    \abil[\fulldomain]{u_h, \varphi_h} 
        = \ip[\fulldomain]{f,\varphi_h},
    \qquad \text{for all }\varphi_h \in \PkZeroTh.
\end{equation}

The discrete error of the $n$-th iterates is now defined as
\begin{equation}\label{eq:discreteSchwarzEnergies}
    \mathcal{E}_{h,n} \coloneqq \sum_{i=1,2} \bignorm{u^{i}_{h,n} - u_h}_{a,\fulldomain_i}^2 = \sum_{i=1,2}\abs{u^{i}_{h,n} - u_h}^2_{H^1(\fulldomain_i)} \; .
\end{equation}

\begin{Theorem}\label{thm:SchwarzTheorem}
    If \(\ell_{\text{ov}}\geq 0\) is sufficiently large, then
    \begin{equation*}
        \mathcal{E}_{h,n+1} \leq  \theta \mathcal{E}_{h,n} \leq \theta^{n+1} \mathcal{E}_{h,0},
        \quad n\geq 0,
    \end{equation*} 
    where \(\theta < 1\).
\end{Theorem}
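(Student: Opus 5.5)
The plan is to work with the errors $e^{i}_{n} \coloneqq u^{i}_{h,n} - \restr{u_h}{\fulldomain_i} \in \PkTh[\fulldomain_i]$. I will combine two ingredients: the discrete Saint-Venant principle (\Cref{Thm:discreteSaintVenant}\ref{Thm:discreteSaintVenant:B} via \Cref{cor:decayfactor}) on each subdomain, and the energy-minimality of the discrete harmonic extension.

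\emph{Step 1 (error equations).} The global solution restricted to $\fulldomain_i$ satisfies $a_{\fulldomain_i}(\restr{u_h}{\fulldomain_i}, \varphi_h^i) = \ip[\fulldomain_i]{f,\varphi_h^i}$ for all $\varphi_h^i \in \PkZeroTh[\fulldomain_i]$, since such test functions extend by zero to $\PkZeroTh$. Subtracting gives the following for $e^{1}_{n+1}$ (and symmetrically for $e^{2}_{n+1}$):
\begin{itemize}
\item[(i)] $a_{\fulldomain_1}(e^{1}_{n+1},\varphi_h^1)=0$ for all $\varphi_h^1\in\PkZeroTh[\fulldomain_1]$;
\item[(ii)] $e^{1}_{n+1}=0$ on $\dfulldomain\cap\dfulldomain_1$;
\item[(iii)] $e^{1}_{n+1}=\restr{e^{2}_{n}}{\Gamma_1}$ on $\Gamma_1$.
\end{itemize}
Hence $e^{1}_{n+1}$ is the discrete $a$-harmonic function on $\fulldomain_1$ with these traces. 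This is exactly the setting of \eqref{eq:modelProblem} with $\lambda=0$, $\Gamma=\Gamma_1$ and $\Gamma^C\neq\emptyset$, so \Cref{ass:DecayAssumption} holds. I also use that the discrete harmonic extension minimizes $\abs{\cdot}_{a,\fulldomain_1}$ among all $w_h\in\PkTh[\fulldomain_1]$ with the same boundary trace.

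\emph{Step 2 (extension by a cutoff competitor).} To bound $\abs{e^{2}_{n+1}}_{a,\fulldomain_2}$, I build a competitor with the trace $\restr{e^{1}_{n}}{\Gamma_2}$. Let $D^{1}_h\subset\Th[\fulldomain_1]$ be the single layer of elements of $\fulldomain_1\cap\fulldomain_2$ touching $\Gamma_2$. Let $\eta$ be a discrete cutoff as in \eqref{eq:AssCutoff} that equals $1$ on $\Gamma_2$ and vanishes outside $D^1_h$. Set $w_h=\NodalInt(\eta\, e^{1}_{n})$, extended by zero to $\fulldomain_2$; this lies in $\PkTh[\fulldomain_2]$, has the correct trace on $\Gamma_2$, and vanishes on $\dfulldomain\cap\dfulldomain_2$. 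By minimality, then \Cref{Lem:StabilityIhProductIh} with the product rule as in the proof of \Cref{Lem:PropCutoff}\ref{Lem:PropCutoff:D},
\begin{equation*}
\abs{e^{2}_{n+1}}^2_{a,\fulldomain_2}\le \abs{w_h}^2_{a,D^1_h}\lesssim \hmin^{-2}\norm{e^{1}_{n}}^2_{L^2(D^1_h)}+\abs{e^{1}_{n}}^2_{H^1(D^1_h)}.
\end{equation*}
The $L^2$ term is controlled as in \eqref{eq:PoincareZero}, because $D^1_h$ touches $\dfulldomain$ where $e^1_n=0$. This gives
\begin{equation*}
\abs{e^{2}_{n+1}}^2_{a,\fulldomain_2}\le C_h\,E^{(1)}_{D^1_h}(0),
\end{equation*}
with $C_h$ depending on $\hmin$, the shape regularity and the Poincaré constant, but not on $\ell_{\mathrm{ov}}$.

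\emph{Step 3 (decay and summation).} The set $D^1_h$ is suitable in $\fulldomain_1$ and is separated from $\Gamma_1$, i.e.\ from the support of the nodal lifting of $\restr{e^2_n}{\Gamma_1}$, by roughly $\ell_{\mathrm{ov}}-2$ layers. Then \Cref{cor:decayfactor} yields
\begin{equation*}
E^{(1)}_{D^1_h}(0)\le\rho^{\ell_{\mathrm{ov}}-2}\abs{e^{1}_{n}}^2_{a,\fulldomain_1}.
\end{equation*}
By symmetry the same holds with the roles of the subdomains exchanged. Summing the two bounds gives $\mathcal{E}_{h,n+1}\le\theta\,\mathcal{E}_{h,n}$ with $\theta=C_h\rho^{\ell_{\mathrm{ov}}-2}$, which is $<1$ once $\ell_{\mathrm{ov}}$ is large. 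Iterating gives the second inequality.

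The main obstacle is Step 2. I must show that the competitor is admissible (it matches the trace on all of $\Gamma_2$, including near the corners where $\Gamma_2$ meets $\dfulldomain$). I must also make sure the Poincaré bound on the thin layer $D^1_h$ has a constant that is uniform enough. If the zero-trace Poincaré constant on a strip of width $h$ degrades, I would instead apply the cutoff over several layers and absorb the loss into $C_h$. This only changes the fixed number of layers subtracted from $\ell_{\mathrm{ov}}$, and since $\rho<1$ is fixed, the conclusion is unaffected.
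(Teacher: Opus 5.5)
There is a genuine gap, and it lies in the size of $C_h$ in Step~2. Your error equations, the energy-minimality argument, the admissibility of the competitor $w_h=\NodalInt(\eta e^1_n)$, and the application of \Cref{cor:decayfactor} in Step~3 are all fine. The problem is that $C_h$ grows under refinement faster than $\rho^{\ell_{\mathrm{ov}}}$ can compensate.

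Your $\eta$ drops from $1$ to $0$ across a single layer, so $\norm{\nabla\eta}_{L^\infty}\sim h_{\min}^{-1}$. The Poincar\'e step cannot return a factor $h$, because $e^1_n$ does not vanish along the long sides of the strip $D^1_h$: it carries exactly the interface data being transmitted. Hence $C_h\gtrsim h_{\min}^{-2}$, up to an $h$-independent Poincar\'e constant. Equivalently, a lifting localized to width $h$ has energy of order $h^{-1}\norm{e^1_n}^2_{L^2(\Gamma_2)}$, which is the $\epsilon^{-1}$ scaling of \Cref{lem:ThinDomainElliptic} with $\epsilon\sim h$.

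Meanwhile $\rho$ is not a fixed number. From $\rho=M_0/(1+M_0)$ with $M_0=C_2(1+h_{\min}^{-1})$ you get $\ln(1/\rho)\approx h_{\min}/C_2$. So $\rho^{\ell_{\mathrm{ov}}}\approx\exp(-\ell_{\mathrm{ov}}h_{\min}/C_2)$ decays only in the \emph{physical} width of the overlap (\Cref{cor:asymp}). Your condition $C_h\rho^{\ell_{\mathrm{ov}}-2}<1$ therefore needs a physical overlap of width at least about $2C_2\ln(1/h_{\min})$.

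In the theorem's setting (a fixed rectangle with fixed subdomains, so $\ell_{\mathrm{ov}}$ grows only through refinement), this fails once $h$ is small; in fact your $\theta$ tends to infinity as $h\to 0$. Your closing remark that a thicker cutoff ``only changes the fixed number of layers subtracted'' misses this point. The number of transition layers must grow like $\epsilon/h$.

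What is missing is a lifting of the interface trace with an $h$-independent energy bound. The paper fixes $\epsilon$ independently of $h$. It uses the localized lifting of \Cref{cor:SupportedDiscreteLifting}, with bound $C\epsilon^{-1/2}\norm{\cdot}_{H^{1/2}_{00}(\Gamma_i)}$. It combines this with the a priori estimate, a trace inequality on $\Omega\setminus\Omega_i$, and decay from $\Omega\setminus\Omega_i$ to $\Omega_j$. The resulting prefactor $4\epsilon^{-1}\max\{C_1^2,C_2^2\}C_{\mathrm{tr}}$ does not depend on $h$.

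Your argument can be repaired in the same spirit:
\begin{enumerate}
\item Let $\eta$ fall from $1$ on $\Gamma_2$ to $0$ across a strip $S_\epsilon\subset\Omega_1\cap\Omega_2$ of fixed physical width $\epsilon$. This is about $m\approx\epsilon/h$ layers, a number that grows under refinement, and gives $\norm{\nabla\eta}_{L^\infty}\lesssim\epsilon^{-1}$.
\item Use a Poincar\'e--Friedrichs inequality on $S_\epsilon$. Its constant $C_P$ is $h$-independent, since $e^1_n=0$ where $S_\epsilon$ meets $\partial\Omega$.
\item Apply the decay estimate from $S_\epsilon$ over the remaining layers.
\end{enumerate}
This gives $\theta\lesssim(1+C_P\epsilon^{-2})\,\rho^{\ell_{\mathrm{ov}}-m-2}$, which depends only on the physical widths $\epsilon$ and $\ell_{\mathrm{ov}}h$.

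With that change, your route (energy minimality plus the cutoff tools of \Cref{sec:Cutoff}) becomes a legitimate and more elementary alternative to the paper's proof. It avoids $H^{1/2}_{00}$ norms, harmonic extensions on thin domains, and Scott--Zhang interpolation.
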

The contraction factor $\theta < 1$ is exponentially small in the number of mesh layers $\ell_{\mathrm{ov}}$ of the overlap. 
Thus, even a modest overlap typically guarantees rapid convergence, while larger overlaps further improve the contraction.
\begin{proof}[Proof of Theorem~\ref{thm:SchwarzTheorem}]
    We define the differences \(d_{h,n}^{i} = u_{h,n}^{i} - \restr{u_h}{\Omega^{i}}\), for \(i=1,2\), of the domain decomposition solution \eqref{eq:SubproblemsDiscreteParallelSchwarz} and the global finite element solution \eqref{eq:globalDiscreteProblemParallelSchwarz}.
    Combining the defintions of  \eqref{eq:SubproblemsDiscreteParallelSchwarz} and \eqref{eq:globalDiscreteProblemParallelSchwarz} shows that the differences solve the problems
    \begin{equation}\label{eq:DifferenceVaris}
    \begin{aligned}
        a_{\fulldomain_1}(d^{1}_{h,n+1}, \varphi_h^{1}) 
            &= 0,
            \quad\text{for all } \varphi_h^{1} \in \PkZeroTh[\fulldomain_1]\\
        \restr{d^{1}_{h,n+1}}{\Gamma_1} &= \restr{d_{h,n}^{2}}{\Gamma_1},\\
        a_{\fulldomain_2}(d^{2}_{h,n+1}, \varphi_h^{2}) 
            &= 0,
            \quad\text{for all } \varphi_h^{1}\in \PkZeroTh[\fulldomain_2]\\
        \restr{d^{2}_{h,n+1}}{\Gamma_2} &= \restr{d_{h,n}^{1}}{\Gamma_2},\\
    \end{aligned}
    \end{equation}
    for \(n\geq 0\).

    We note that any discrete lifting, which is a projection on the discrete trace space \(\restr{\PkTh[\fulldomain_i]}{\Gamma^i}\cap H^{1/2}_{00}(\Gamma_i)\), produces the same sequence of solutions, since the associated bilinearforms are coercive and hence, the solution is unique.
    In order to use a sharp a-priori bound of the discrete solution, we choose a localized lifting.
    We fix a localization parameter \(\epsilon > 0\) independent of \(h\) and choose liftings \(d_{h,\epsilon,n+1}^i \in \PkTh[\fulldomain_i]\cap H^1_{\partial\Omega_i\setminus\Gamma_i}(\Omega_i)\) such that
    \begin{equation*}
        \restr{d_{h,\epsilon,n+1}^1}{\Gamma_1} = \restr{d_{h,n}^2}{\Gamma_1},
        \qquad \restr{d_{h,\epsilon,n+1}^2}{\Gamma_2} = \restr{d_{h,n}^1}{\Gamma_2},
    \end{equation*}
    with 
    \begin{equation*}
        \supp d_{h,\epsilon,n+1} 
            \subset \Omega_{i,\epsilon} = \setc{x\in \Omega_i}{\operatorname{dist}(x,\Gamma_i) \leq \epsilon}.
    \end{equation*}
    A suitable lifting is constructed in Appendix~\ref{sec:LocalBoundaryLifting}, which is based on a Scott-Zhang interpolation operator together with a localization argument.
    In \Cref{cor:SupportedDiscreteLifting}, we prove the bounds 
    \begin{equation}\label{eq:LiftingTraceInequalities}
        \begin{aligned}
            \bignorm{d_{h,\epsilon,n+1}^1}_{H^1(\Omega_1)}
                &\leq C_1\epsilon^{-1/2} \bignorm{d_{h,n}^2}_{H^{1/2}_{00}(\Gamma_1)},\\
            \bignorm{d_{h,\epsilon,n+1}^2}_{H^1(\Omega_2)}
                &\leq C_2\epsilon^{-1/2} \bignorm{d_{h,n}^1}_{H^{1/2}_{00}(\Gamma_2)},
        \end{aligned}        
    \end{equation}
    with constants \(C_1,C_2>0\) that only depend on the shape regularity of the mesh and not on \(h\) and \(\epsilon\).

    Combining the definition of the discrete error \eqref{eq:discreteSchwarzEnergies}, the a-prior estimate \eqref{eq:APrioriEstimateDiscreteSolution}, and the inequalities \eqref{eq:LiftingTraceInequalities}, we obtain the estimate
    \begin{equation*}
        \begin{aligned}
        \mathcal{E}_{h,n+1} 
        &=\sum_{i=1,2}\abs{u^{i}_{h,n+1} - u_h}^2_{H^1(\fulldomain_i)} 
        = \sum_{i=1,2}\abs{d_{h,n+1}^i}^2_{H^1(\fulldomain_i)} \\
        &\leq 4 \Bigl( \norm{d_{h,\epsilon, n+1}^1}^2_{H^1(\fulldomain_1)} + \norm{d_{h,\epsilon, n+1}^2}^2_{H^1(\fulldomain_2)}  \Bigr) \\
        &\leq 4 \epsilon^{-1} \Bigl(C_1^2 \norm{d_{h,n}^2}^2_{H^{1/2}_{00}(\Gamma_1)} + C_2^2 \norm{d_{h,n}^1}^2_{H^{1/2}_{00}(\Gamma_2)} \Bigr)
        \end{aligned}
    \end{equation*}
    The trace inequality now allows swapping from one subdomain \(\fulldomain_j\) to its complement $\fulldomain \setminus \fulldomain_i$, that is
    \begin{equation*}
        \bignorm{d_{h,n}^j}^2_{H^{\half}_{00}(\Gamma_i)}
        =\bignorm{u^{j}_{h,n} - u_h}^2_{H^{\half}_{00}(\Gamma_i)} 
        \leq C_{\text{tr}} \abs{u^{j}_{h,n} - u_h}_{H^1(\fulldomain  \setminus \fulldomain_i)}^2, 
        \quad i=1,2, \text{ and } j \neq i.
    \end{equation*}
    See also \Cref{Fig:OvDecompSetminus} for an illustration.
    \begin{figure}[t]
        \centering
        \includegraphics{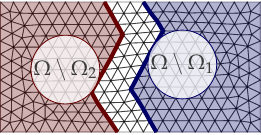}
        \caption{The complement domains $\fulldomain \setminus \fulldomain_i$ in the example decomposition of \Cref{Fig:OvDecomp}. Note, that $\fulldomain \setminus \fulldomain_1 \subset \fulldomain_2$ and vice versa.}
        \label{Fig:OvDecompSetminus}
    \end{figure}
    We now use the decay estimate of \cref{Thm:discreteSaintVenant}\ref{Thm:discreteSaintVenant:B} to enlarge the complementary domain to the full subdomain, gaining a contraction factor depending on the size of the overlap \(\ell_{\text{ov}}\). That is,
    \begin{equation*}
        \abs{u^{j}_{h,n} - u_h}_{H^1(\fulldomain  \setminus \fulldomain_i)}^2 \leq \rho^{\ell_{\text{ov}}} \abs{u^{j}_{h,n} - u_h}_{H^1(\fulldomain_j)}^2, 
    \end{equation*}
    with $\rho = M_0 / (1+ M_0) < 1$ for $j=1,2$ and $i \neq j$.
    Combining the previous estimates then yields
    \begin{align*}
        \mathcal{E}_{h,n+1} 
        \leq \! \frac{4}{\epsilon}\max\monosetc{C_1^2,C_2^2} C_{\text{tr}} \rho^{\ell_{\text{ov}}} 
        \Bigl( \abs{u^{1}_{h,n} - u_h}_{H^1(\fulldomain_1)}^2 \! + \abs{u^{2}_{h,n} - u_h}_{H^1(\fulldomain_2)}^2\Bigr) \!
        = C \rho^{\ell_{\text{ov}}}  \mathcal{E}_{h,n}. 
    \end{align*}
    Hence, if $\ell_{\mathrm{ov}}$ is sufficiently large such that 
    \begin{equation*}
        \theta \coloneqq 4\epsilon^{-1} \max\monosetc{C_1^2,C_2^2} C_{\text{tr}} \, \rho^{\ell_{\mathrm{ov}}} < 1,
    \end{equation*}
    the method converges independently of the initial iterates $u^{1}_{h,0}$ and $u^{2}_{h,0}$, which proves the claim.
\end{proof}

\bigskip

\section{Numerical validation}
\label{sec:NumericalValidation}
In this section, we numerically validate the discrete energy decay proven in \Cref{sec:MainResults}, namely
\begin{equation*}
    E_{\Dh}(\ell) \leq \rho E_{\Dh}(\ell+1), \quad \ell < \ellmax,
\end{equation*}
where $\rho<1$ denotes the contraction factor from \Cref{cor:decayfactor} and the discrete energy $E_{\Dh}(\ell)$ is defined in \eqref{eq:DefEnergyOnPatches}. 
Moreover, we compare the observed decay rates with the asymptotic scaling predicted by the estimate in \Cref{cor:asymp}.

For each numerical example, we consider a domain $\Omega$ equipped with a
conforming triangulation $\Th[\Omega]$ and fix the polynomial degree $k$ of the finite element discretization.
On $\Gamma \subseteq \partial\Omega$, we impose
inhomogeneous Dirichlet boundary data~$g$, while homogeneous Dirichlet
conditions are prescribed on the complementary part
$\Gamma^{C}$. Furthermore, we specify a starting
patch $\Dh \subset \Th = \Th[\Omega]$. Throughout this section we denote the discrete solution by
\[
u_h = u_{h0} + u_{hg} \in \PkTh[],
\]
where $u_{h0}$ denotes the solution of \eqref{eq:variationalFormulationDiscrete}.

\subsection*{\texorpdfstring{Measurement of the contraction factor $\rho$}{Measurement of the contraction factor}}
The total energy of the discrete solution~$u_h$ on $\fulldomain$ is given by 
 $\energynorm{u_h}_{\fulldomain}^2$, cf.~\eqref{eq:NaturalEnergyNorm}.
For each layer $\ell=0,\dots,\ell_{\max}$, we construct the corresponding patch $\mathcal{P}_\ell(\Dh)$ and compute the localized energy $E_{\Dh}(\ell)$ according to \eqref{eq:DefEnergyOnPatches}. 
For comparability, we introduce the relative energy
\begin{equation}
    \label{def:relativeEnergy}
    E_{\Dh}^{\mathrm{rel}}(\ell) \coloneq \frac{E_{\Dh}(\ell)}{\energynorm{u_h}_{\fulldomain}^2}, \quad \text{for } 0 \leq \ell \leq \ellmax
.\end{equation}
A semilogarithmic plot of the values
$E_{\Dh}^{\mathrm{rel}}(\ell)$ already provides a first indication of the
expected exponential decay.

To determine the contraction factor quantitatively, we form the quotient
\begin{equation*}
    Q_{\Dh}(\ell) = \frac{E_{\Dh}^{\mathrm{rel}}(\ell)}{E_{\Dh}^{\mathrm{rel}}(\ell+1)}, \quad \text{for } 0 \leq \ell < \ellmax.
\end{equation*}
For a fixed layer $\ell$, this quotient measures the ratio of the energy
contained in the patch $\Patch{\ell}{\Dh}$ to that of the next larger patch.
Equivalently, it represents the local slope of the relative energy increase from
layer $\ell$ to layer $\ell+1$.

In practice, the values of $Q_{\Dh}(\ell)$ may exhibit mild fluctuations
for the first and the last few layers. In our experiments, we observed
slightly smaller quotients for small $\ell$, while the quotients tend to
increase for large $\ell$ as the patches approach the full domain
$\fulldomain$.
To obtain a single representative value of the contraction factor for a
given configuration, we therefore define $\widehat{\rho}$ as the median of
the set $\{Q_{\Dh}(\ell)\}_{0 \leq \ell < \ellmax}$, i.e.,
\begin{equation}
    \label{def:rhohat}
    \widehat{\rho} 
    \coloneqq \mathrm{median} \setc{Q_{\Dh}(\ell)}{0 \leq \ell < \ellmax} .
\end{equation}
This quantity provides a robust estimate of the slope of the normalized
energies for the majority of layers as verified later.

\subsection{Rectangular domain with inhomogeneous boundary conditions on the left}

As a first numerical example, we consider the rectangular domain 
\begin{equation*}
    \fulldomain = [0,2] \times [0,1],
\end{equation*}
with boundary decomposition $\partial\fulldomain = \Gamma \cup \Gamma^{C}$,
where
\begin{equation*}
    \Gamma = \partial\fulldomain \cap \{ (x,y) \in \mathbb{R}^2 : x = 0 \}.
\end{equation*}
The domain is discretized by a non-equidistant triangulation
$\Th = \Th[\fulldomain]$ generated using gmsh (cf. \cite{GeuRe09}), and we employ linear finite elements. The resulting mesh with target mesh size $\htar = 0.1$ is shown in \Cref{fig:rectangle}, with $h_{\min} \approx 0.089$ and $h = h_{\max} \approx 0.117$. For the later experiments, finer meshes were generated using smaller values of $\htar$.

\begin{figure}[t]
    \begin{center}
        \includegraphics{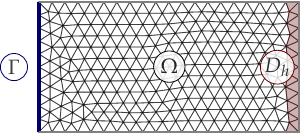}
    \end{center}
    \caption{Triangular finite element mesh of the rectangular domain $\fulldomain$ with inhomogeneous boundary conditions prescribed on $\Gamma$ (left boundary). The initial cell set $\Dh$ is highlighted in red. The displayed mesh corresponds to target mesh size $\htar=0.1$.
    Simulations were performed on independently generated finer meshes.}
    \label{fig:rectangle}
\end{figure}

\begin{figure}[t]
    \begin{center}
        \includegraphics{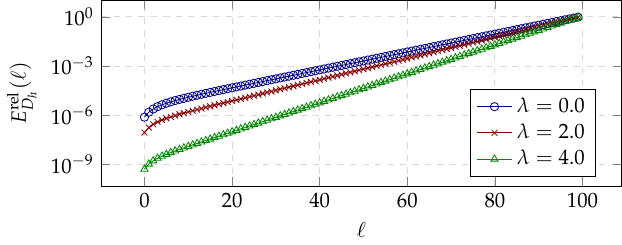}
    \end{center}
    \caption{Relative energy $E_{\Dh}^{\mathrm{rel}}(\ell)$ on the patches $\Patch{\ell}{\Dh}$ for $0 \le \ell \le \ell_{\max}$ shown in a semilogarithmic plot for different values of $\lambda$. The approximately linear behavior indicates exponential growth with respect to $\ell$. The underlying mesh was generated with target mesh size $h_{tar}=0.02$.}
    \label{fig:rectangle-NormedEnergies}
\end{figure}

\begin{figure}[ht]
    \begin{center}
        \includegraphics{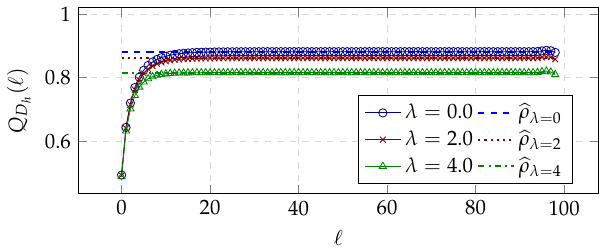}
    \end{center}
    \caption{Layer-wise quotients of relative energies $Q_{\Dh}(\ell)$ for $0 \le \ell < \ell_{\max}$ and different values of $\lambda$ (solid lines). The horizontal dashed, dotted, and dash-dotted lines indicate the corresponding median values $\widehat{\rho}$. The underlying mesh was generated with target mesh size $\htar = 0.02$.}
    \label{fig:rectangle-Quotients}
\end{figure}

On $\Gamma \subseteq \partial \fulldomain$ we prescribe the inhomogeneous
Dirichlet datum
\[
g(y) = \sin(\pi y).
\]
We first verify that the proposed procedure for measuring the contraction
factor $\widehat{\rho}$ is justified; see
\Cref{fig:rectangle-NormedEnergies,fig:rectangle-Quotients}, where we depict the relative energies $E_{\Dh}^{\mathrm{rel}}(\ell)$ and the per-layer quotients $Q_{\Dh}(\ell)$, respectively.
Moreover, the data exhibit extremely low dispersion around the median,
which is quantified by the median absolute deviation (MAD), which is defined as 
\begin{equation*}
   \mathrm{MAD} \coloneq \mathrm{median}\setc{\abs{Q_{\Dh}(\ell)-\widehat{\rho}}}{0 \le \ell < \ell_{\max}} 
\end{equation*}
and reported in \Cref{Table:MAD}. 

\setlength{\tabcolsep}{12pt}
\begin{table}[ht]
    \centering
    \begin{tabular}[t]{l|rr}
        \hline
        &\multicolumn{1}{c}{$\widehat{\rho}$} 
        & \multicolumn{1}{c}{MAD}\\
        \hline
        $\lambda = 0.0 $ &  $0.882$ & $4.44\cdot10^{-05}$ \\
        $\lambda = 2.0 $ &  $0.862$ & $4.33\cdot10^{-05}$ \\
        $\lambda = 4.0 $ &  $0.816$ & $2.65\cdot10^{-05}$ \\
        \hline
    \end{tabular}
    \vspace{4pt}
    \caption{Median contraction factor $\widehat{\rho}$ and MAD of the layer-wise quotients $Q_{\Dh}(\ell)$ for different values of $\lambda$. The MAD quantifies the typical deviation of the quotients from their median.}

    \label{Table:MAD}
\end{table}

The small MAD values (of order $10^{-5}$) indicate that the layer-wise 
quotients $Q_{\Dh}(\ell)$ exhibit only negligible fluctuations around their 
median, apart from the first few layers. 
This quantitatively confirms the strong alignment observed in 
\Cref{fig:rectangle-Quotients} and supports the interpretation of $\widehat{\rho}$ as a robust representative contraction factor.

\begin{figure}[ht]
    \centering
    \begin{minipage}[t]{0.55\textwidth}
        \centering
        \includegraphics{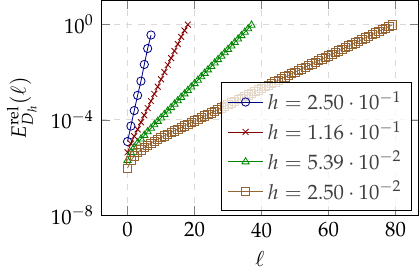}
    \end{minipage}
    \begin{minipage}[t]{0.44\textwidth}
        \centering
        \includegraphics{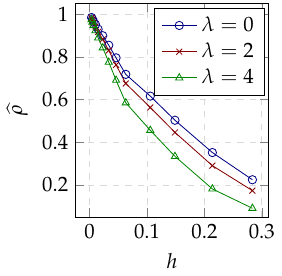}
    \end{minipage}
    \caption{Left: Relative energies $E^{\mathrm{rel}}_{\Dh}(\ell)$ in a semilogarithmic plot versus the layer index $\ell$ for $\lambda = 0$ and different mesh sizes $h$. 
    Right: Measured contraction factor $\widehat{\rho}$ plotted against $h$ for different values of $\lambda$.} 
    \label{fig:rectangle-NormedEnergies-h}
\end{figure}

\begin{figure}[ht]
    \begin{center}
        \includegraphics{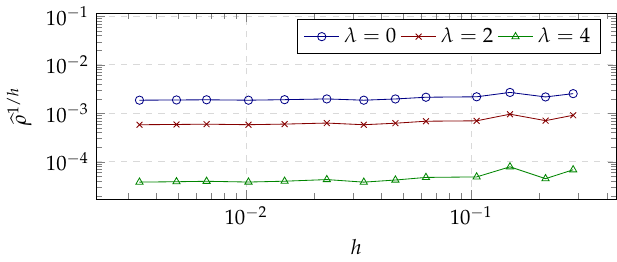}
    \end{center}
   \caption{Scaled contraction factor $\widehat{\rho}^{\,1/h}$ plotted against the mesh size $h$ in a double logarithmic scale for different values of $\lambda$. 
    As $h \to 0$, the scaled contraction factor approaches a constant, in agreement with the asymptotic prediction of \Cref{cor:asymp}.}
    \label{fig:rectangle-SlopeScaled-h}
\end{figure}

Next, our goal is to investigate the dependence of the measured contraction factor $\widehat{\rho}$ on the
mesh size $h$. To this end, we perform a sequence of computations on
successively refined meshes; see
\Cref{fig:rectangle-NormedEnergies-h}.

The numerical results indicate that the decay factor per layer approaches
$1$ as $h \to 0$. At first glance, this behavior appears to contradict the
discrete decay result. However, this effect can be explained by the fact
that both the diameter of the starting patch $\Dh$ and the physical width
of each layer $\bLayer{\ell}$ decrease with $h$. At the same time, the
maximal layer index $\ell_{\max}$ increases as the mesh is refined, so that
the discrete Saint-Venant principle from \Cref{Thm:discreteSaintVenant} can
be applied over a larger number of layers.

Motivated by the asymptotic estimate in \Cref{cor:asymp}, we therefore
rescale the measured decay by raising the data to the power $1/h$. The
resulting behavior is shown in \Cref{fig:rectangle-SlopeScaled-h} and we see that the scaled rate indeed behaves like a constant, when $h\to 0$.

\subsection{Hexagonal domain with inhomogeneous boundary conditions on all sides}

We consider a hexagonal domain centered at
the origin, where the distance from each vertex to the origin is equal to
one. 
On the entire boundary $\Gamma = \partial\fulldomain$, we prescribe the
inhomogeneous Dirichlet condition $g \equiv 1$. 
The mesh is constructed such that it contains a
structured submesh covering the square $[-0.2,0.2] \times [-0.2,0.2]$, see \Cref{fig:hexagon}. The starting patch $\Dh$ is
chosen as the collection of all elements $K \in \Th$ belonging to this
embedded submesh.
By this construction, the geometric support
$\bigcup_{K \in \Dh} \overline{K}$ remains of fixed physical size under mesh
refinement.

\begin{figure}[ht]
    \begin{center}
        \includegraphics{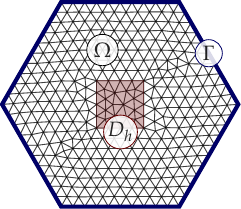}
    \end{center}
    \caption{Triangular finite element mesh of the hexagonal domain $\fulldomain$ with inhomogeneous Dirichlet boundary condition $g \equiv 1$ prescribed on $\Gamma = \partial\fulldomain$. 
    The starting patch $\Dh$ (red) consists of the elements of the structured submesh covering the square $[-0.2,0.2]^2$. 
    The depicted mesh corresponds to target mesh size $\htar = 0.1$.}
    \label{fig:hexagon}
\end{figure}

\begin{figure}[ht]
    \begin{center}
        \includegraphics{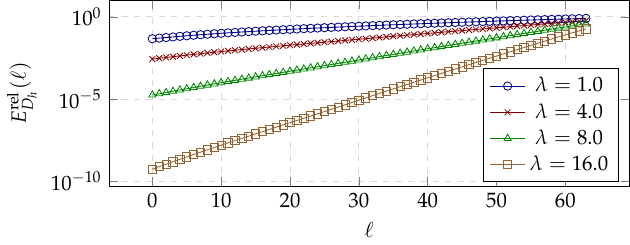}
    \end{center}
    \caption{Relative energies $E^{\mathrm{rel}}_{\Dh}(\ell)$ plotted against the layer index $\ell$ in a semilogarithmic scale for different values of $\lambda$. 
    The starting patch $\Dh$ has fixed physical support. 
    The underlying mesh corresponds to target mesh size $\htar = 0.01$.}
    \label{fig:hexagon-energies}
\end{figure}

\begin{figure}[ht!]
    \begin{center}
        \includegraphics{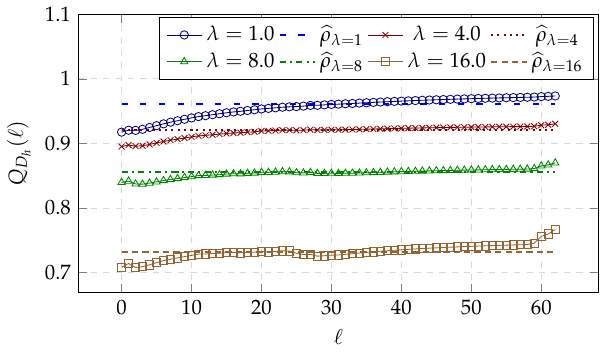}
    \end{center}
    \caption{Layer-wise quotients $Q_{\Dh}(\ell)$ for $0 \le \ell < \ell_{\max}$ and different values of $\lambda$ (solid lines). 
    The horizontal dashed lines indicate the corresponding median contraction factors $\widehat{\rho}_{\lambda}$. 
    The starting patch $\Dh$ has fixed physical support. 
    Mesh target size $\htar = 0.01$.}
    \label{fig:hexagon-quotients}
\end{figure}

\begin{figure}[ht]
    \begin{center}
        \includegraphics{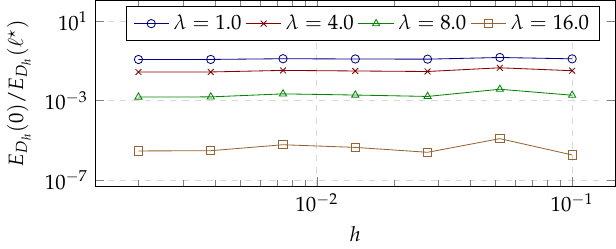}
    \end{center}
    \caption{Energy ratio $E_{\Dh}(0) / E_{\Dh}(\ell^{\star})$ plotted against the mesh size $h$ for fixed physical width $\delta = \ell^{\star} h_{\min} = 0.3$ and different values of $\lambda$. 
    The near constant behavior as $h \to 0$ confirms the exponential decay with respect to physical distance as stated in  \Cref{cor:asymp}.}

    \label{fig:hexagon-energydropfixedpatch}
\end{figure}

\begin{figure}[ht]
    \begin{center}
        \includegraphics{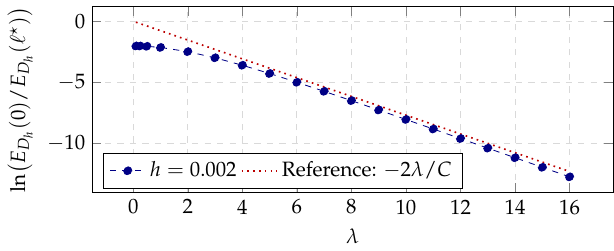}
    \end{center}
    \caption{Dependence of $\ln\!\bigl(E_{\Dh}(0)/E_{\Dh}(\ell^{\star})\bigr)$ on $\lambda$ for $\htar = 0.002$. 
    The dotted line shows the reference slope $-2\lambda/C$ with $C = 2.6$, in accordance with \Cref{cor:asymp}. 
    The linear behavior confirms the predicted exponential dependence of the decay rate on $\lambda$.}
    \label{fig:hexagon-energydropfixedpatchlambda}
\end{figure}

In \Cref{fig:hexagon-energies}, we plot the relative energies
$E^{\mathrm{rel}}_{\Dh}(\ell)$ for a fixed mesh size $h$ and all layers
$0 \leq \ell < \ellmax$, considering several values of the parameter
$\lambda$. The approximately linear behavior confirms exponential decay per layer. The corresponding quotients $Q_{\Dh}(\ell)$ are shown in
\Cref{fig:hexagon-quotients}, where we additionally indicate the median
value $\widehat{\rho}$ for each choice of $\lambda$.

Overall, the measurement $\widehat{\rho}$ still provides a reasonable
characterization of the decay behavior, in particular for larger values of
$\lambda$. For small $\lambda$, however, the median quotient may be
pessimistic in the sense that the decay in the first few layers is even
stronger than suggested by $\widehat{\rho}$; see
\Cref{fig:hexagon-energies,fig:hexagon-quotients}.

Rather than rescaling the median quotient by a power of $1/h$, we therefore
adopt the more geometric point of view discussed in \Cref{Rem:Interpretation-rho-scaling}. Motivated by the fixed physical size
of the initial patch $\Dh = \Patch{0}{\Dh}$, we introduce a layer index
$\ell^{\star} = \ell^{\star}(h)$ such that the enlarged patch
$\Patch{\ell^{\star}}{\Dh}$ covers a neighborhood of $\Dh$ with fixed
physical width. More precisely, we fix a parameter
$\delta \coloneqq \ell^{\star} h_{\min}$, where $h_{\min}$ denotes the
minimal element diameter, so that $\delta$ represents the additional
physical width of the patch $\Patch{\ell^{\star}}{\Dh}$ compared to the
initial cell set $\Dh$.  

In analogy with the asymptotic estimate in \Cref{cor:asymp}, we therefore expect the ratio of energies on
$\Dh$ and $\Patch{\ell^{\star}}{\Dh}$ to remain bounded as $h \to 0$.

To verify this, we plot the energy ratio
$E_{\Dh}(0) / E_{\Dh}(\ell^{\star})$ against $h$ for a fixed value
$\delta \equiv 0.3$; see \Cref{fig:hexagon-energydropfixedpatch}. Finally,
\Cref{fig:hexagon-energydropfixedpatchlambda} shows that the logarithm of
this ratio depends linearly (with negative slope) on $\lambda$, in full
agreement with the predicted structure of the constant in
\Cref{cor:asymp}.

\subsection*{Conclusion}
The numerical experiments fully confirm the discrete Saint-Venant principle and its asymptotic interpretation. 
The relative energies exhibit a clear exponential decay with respect to the layer index, and the layer-wise quotients remain remarkably stable, as quantified by the small median absolute deviations. 
Under mesh refinement, the measured contraction factor behaves consistently with the predicted scaling, approaching unity per layer while yielding a constant decay rate per physical distance. 
In the fixed-width setting, the energy drop across a prescribed physical neighborhood remains bounded as $h \to 0$, and its logarithm depends linearly on $\lambda$, in precise agreement with the structure predicted by \Cref{cor:asymp}. 
Together, these results provide comprehensive numerical validation of the theoretical decay estimates.

The code accompanying the numerical experiments is publicly available at
\begin{equation*}
    \text{\url{https://gitlab.kit.edu/tim.buchholz/dsv-numerical-validation.git}}\; .
\end{equation*}

\appendix

\section{Local boundary lifting}
\label{sec:LocalBoundaryLifting}

We are in the setting of \cref{subsec:ProblemAndVariational}, i.e., \(\fulldomain\subset \R^d\) is a polyhedral domain and \(\Gamma\subset \partial\Omega\) a simply connected subset of the boundary with complement \(\Gamma^\text{C} = \partial\Omega \setminus \Gamma\). 
In this section, we assume that \(\Gamma \neq \emptyset\) and restrict ourselves to \(d=2\) for the sake of presentation.

The aim of this section is to construct a discrete lifting for \(g\in H^{1/2}_{00}(\Gamma)\), that is \(H^1\)-stable and localized in an \(\epsilon\)-neighborhood of \(\Gamma\).
We use a common construction by interpolating a harmonic extension of the trace on the strip.
Since we demand that the lifting is a projection with respect to \(\restr{\PkTh[]}{\Gamma}\), we use the Scott-Zhang quasi-interpolation \cite{ScoZha90}, which is precisely constructed for that purpose.
Furthermore, we explicitly track the thickness $\epsilon$ of the strip.

For a given geometry dependent \(\epsilon_0> 0\), we define for all \(\epsilon\in (0,\epsilon_0)\) the tubular subdomain
\begin{equation}\label{eq:TubularSubdomain}
    \Omega_\epsilon 
        = \setc{x \in \Omega}{ \operatorname{dist}(x,\Gamma)\leq \epsilon}
        \subsetneq \Omega.
\end{equation}
Note that a small enough \(\epsilon_0\) ensures that \(\partial \Omega_\epsilon \setminus \Gamma\) is an offset of straight-line segments connected with circular arcs of radius \(\epsilon\) for every vertex of \(\Gamma\).
Hence, \(\Omega_\epsilon\) is a Lipschitz domain.  

\begin{Lemma}\label{lem:ThinDomainElliptic}
    Let \(g\in H^{1/2}_{00}(\Gamma)\) and \(u_\epsilon \in H^1(\Omega_\epsilon)\) denote the harmonic extension, i.e.,
    \begin{equation}\label{eq:TubularPDE}
        -\laplace u_\epsilon =  0,
        \qquad \restr{u_\epsilon}{\Gamma} = g,
        \qquad \restr{u_\epsilon}{\partial\Omega_\epsilon\setminus\Gamma} = 0.
    \end{equation}
    There exists a constant \(C>0\) independent of \(\epsilon\) and \(g\) such that
    \begin{equation}\label{eq:TubularPDE:APrioriEstimate}
        \norm{u_\epsilon}_{H^1(\Omega_\epsilon)}
            \leq C\epsilon^{-1/2} \norm{g}_{H^{1/2}_{00}(\Gamma)}.
    \end{equation}
\end{Lemma}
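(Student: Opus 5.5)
Since $u_\epsilon$ minimizes the Dirichlet energy among all $H^1(\Omega_\epsilon)$ functions with the prescribed traces, it suffices to exhibit one competitor $w_\epsilon\in H^1(\Omega_\epsilon)$ with $\restr{w_\epsilon}{\Gamma}=g$, $\restr{w_\epsilon}{\partial\Omega_\epsilon\setminus\Gamma}=0$ and $\norm{w_\epsilon}_{H^1(\Omega_\epsilon)}\le C\epsilon^{-1/2}\norm{g}_{H^{1/2}_{00}(\Gamma)}$. From the minimality we get $\abs{u_\epsilon}_{H^1(\Omega_\epsilon)}\le\abs{w_\epsilon}_{H^1(\Omega_\epsilon)}$. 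For the $L^2$ part we apply a Poincaré inequality to $u_\epsilon-w_\epsilon\in H^1_0(\Omega_\epsilon)$, whose constant is $O(\epsilon)$ since $\Omega_\epsilon$ has width $\epsilon$. This gives $\norm{u_\epsilon}_{L^2}\le\norm{w_\epsilon}_{L^2}+C\epsilon(\abs{u_\epsilon}_{H^1}+\abs{w_\epsilon}_{H^1})$, which is harmless for $\epsilon<\epsilon_0$.

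To construct $w_\epsilon$, we take the standard lifting $u_g\in H^1(\Omega)$ from \eqref{eq:StabilityDiscreteLifting}, so that $\restr{u_g}{\partial\Omega}=\widetilde g$ and $\norm{u_g}_{H^1(\Omega)}\le C_\gamma\norm{g}_{H^{1/2}_{00}(\Gamma)}$. We multiply it by a cutoff $\chi_\epsilon(x)=\psi(\operatorname{dist}(x,\Gamma)/\epsilon)$, where $\psi\in C^\infty(\R)$, $\psi=1$ on $[0,1/3]$ and $\psi=0$ on $[2/3,\infty)$. Then $w_\epsilon=\chi_\epsilon u_g$ has the correct traces: $\chi_\epsilon=1$ on $\Gamma$, $\chi_\epsilon=0$ on the offset part of $\partial\Omega_\epsilon$, and on the remaining pieces of $\partial\Omega_\epsilon\setminus\Gamma$ lying on $\Gamma^C$ we have $u_g=0$ because $\widetilde g=0$ there. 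The cutoff satisfies $\abs{\grad\chi_\epsilon}\le C\epsilon^{-1}$. The product rule then gives
\begin{equation*}
\norm{w_\epsilon}_{H^1(\Omega_\epsilon)}\le C\norm{u_g}_{H^1(\Omega_\epsilon)}+C\epsilon^{-1}\norm{u_g}_{L^2(\Omega_\epsilon)}.
\end{equation*}

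The crux is the thin-strip estimate $\norm{u_g}_{L^2(\Omega_\epsilon)}\le C\epsilon^{1/2}\norm{u_g}_{H^1(\Omega)}$, which turns the factor $\epsilon^{-1}$ into $\epsilon^{-1/2}$. I would prove it using the one-dimensional inequality $\int_0^\epsilon\abs{v(t)}^2\dint{t}\le\epsilon\sup_t\abs{v(t)}^2$ together with $\sup_{t\le\epsilon_0}\abs{v(t)}^2\le C(\norm{v}_{L^2(0,\epsilon_0)}^2+\norm{v'}_{L^2(0,\epsilon_0)}^2)$, applied along normal fibers. This requires local coordinates $(s,t)$ in which $\Omega_{\epsilon_0}$ is bi-Lipschitz equivalent to $\Gamma\times(0,\epsilon_0)$, with Jacobians bounded independently of $\epsilon$; integrating over $s$ then yields the claim. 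Equivalently, the estimate follows from the uniform trace inequality on each parallel curve $\{\operatorname{dist}=t\}$, integrated over $t\in(0,\epsilon)$.

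I expect the main obstacle to be the geometry near the vertices of $\Gamma$ and near the endpoints of $\Gamma$, where $\Gamma$ meets $\Gamma^C$. There the normal fibration degenerates into circular arcs, and one must check that the bi-Lipschitz constants and the fiberwise Sobolev embedding constants stay uniform in $\epsilon\in(0,\epsilon_0)$. I would handle this by a finite partition of unity: straight-segment patches use Cartesian fibers, while vertex patches use polar coordinates, in which the arcs of radius $\le\epsilon$ make the same fiber argument work radially. The Poincaré constant $O(\epsilon)$ for the $L^2$ bound is treated by the same fiber argument, integrating from the offset boundary where $u_\epsilon-w_\epsilon$ vanishes.
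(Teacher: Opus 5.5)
Your proof is correct. It departs from the paper precisely where the $\epsilon$-dependence is produced.

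Both arguments use the Dirichlet principle, plus a Poincaré inequality for the $L^2$ part, to reduce the claim to exhibiting one competitor with the prescribed traces and $H^1$-norm of order $\epsilon^{-1/2}$. The paper builds this competitor locally. It splits $g$ with a partition of unity subordinate to balls of radius $\epsilon$ at the vertices of $\Gamma$ and strips of width $\epsilon_m\sim\epsilon$ along the edges. It then solves a harmonic problem on each cone, bounded by rescaling to a reference cone, and on each rectangle, bounded by an explicit sine series where $k_n\coth(k_n\epsilon_m)\le k_n+\epsilon_m^{-1}$ produces the scaling. Finally it sums the pieces. You instead truncate the fixed, $\epsilon$-independent lifting $u_g$ of \eqref{eq:StabilityDiscreteLifting} at scale $\epsilon$, and push all of the $\epsilon$-dependence into the boundary-layer inequality $\norm{v}_{L^2(\Omega_\epsilon)}\le C\epsilon^{1/2}\norm{v}_{H^1(\Omega)}$.

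Your route is shorter, needs no explicit solutions, and works for Lipschitz domains in any dimension, whereas the paper restricts to polygons in $d=2$. It also avoids a step the paper leaves implicit. Since $\phi_j,\psi_m$ vary on scale $\epsilon$, the pieces $\phi_j g$, $\psi_m g$ are bounded in $H^{1/2}_{00}$ only by $C\bigl(\norm{g}_{H^{1/2}_{00}(\Gamma)}+\epsilon^{-1/2}\norm{g}_{L^2(\Gamma)}\bigr)$, so the final summation needs more care than stated.

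In exchange, the paper's construction is explicit and shows how the constant depends on the corner angles. It also exhibits the finer structure $\norm{g}_{H^{1/2}_{00}(\Gamma)}^2+\epsilon^{-1}\norm{g}_{L^2(\Gamma)}^2$. You recover the same structure by using your fiber bound in the form $\int_0^{c\epsilon}\abs{v}^2\,\mathrm{d}t\lesssim\epsilon\abs{v(0)}^2+\epsilon^2\int_0^{c\epsilon}\abs{v'}^2\,\mathrm{d}t$, with $v(0)$ the boundary trace.

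Two of the obstacles you anticipate are not real:
\begin{enumerate}
\item Since $\Omega_\epsilon\subset\{x\in\Omega:\operatorname{dist}(x,\partial\Omega)\le\epsilon\}$, it suffices to work in finitely many fixed Lipschitz-graph charts of $\partial\Omega$. In these charts the layer lies within height $C\epsilon$ above the graph, so fibers in one fixed direction suffice. No normal fibration and no special treatment of the vertices or of the endpoints of $\Gamma$ is needed.
\item The $L^2$ bound requires no $O(\epsilon)$ Poincaré constant. Because $u_\epsilon-w_\epsilon$ extended by zero lies in $H^1_0(\Omega)$, the Poincaré inequality on $\Omega$ already suffices.
\end{enumerate}
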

The Lemma is proved at the end of this section.
We motivate the scaling \(\epsilon^{-1/2}\) in dimension one.
Let \(\Omega = [0,1]\) and \(\Gamma = \monosetc{0}\).
Then, the solution of \eqref{eq:TubularPDE} for \(g\in \R\) is given by \(u(x) = g(1 - \epsilon^{-1}x)\).
Hence, we obtain the scaling \(\norm{u'}_{L^2(\Omega_\epsilon)} = \epsilon^{-1/2} |g|\), which is sharp in one dimension.

With \Cref{lem:ThinDomainElliptic} and the Scott-Zhang interpolation operator in \cite{ScoZha90}, we can proof the existence of the desired discrete lifting.
\begin{Corollary}\label{cor:SupportedDiscreteLifting}
    Let \(2h \leq \epsilon\).
    For \(g\in H^{1/2}_{00}(\Gamma)\) there exists a discrete lifting \(u_{hg}\in \PkTh[]\cap H^1_{\Gamma^\text{C}}(\Omega)\) such that
    \begin{equation*}
        \norm{u_{hg}}_{H^1(\Omega)}
            \leq C \epsilon^{-1/2}\norm{g}_{H^{1/2}_{00}(\Gamma)},
        \qquad \supp u_{hg} \subset \Omega_{\epsilon},
    \end{equation*} 
    where the constant \(C>0\) is independent of \(\epsilon\), \(g\) and \(h\), but depends on the shape-regularity.
    For \(g\in \restr{\PkTh[]}{\Gamma}\cap H^{1/2}_{00}(\Gamma)\) the lifting is a projection, i.e., \(\restr{u_{hg}}{\Gamma} = g\). 
\end{Corollary}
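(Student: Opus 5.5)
The plan is to define \(u_{hg} = \mathcal{I}^{SZ}_h(\widetilde{u}_\epsilon)\). Here \(\widetilde{u}_\epsilon \in H^1(\Omega)\) is the harmonic extension \(u_\epsilon\) from \Cref{lem:ThinDomainElliptic}, extended by zero to \(\Omega\setminus\Omega_\epsilon\). The symbol \(\mathcal{I}^{SZ}_h\) denotes the Scott--Zhang quasi-interpolation of \cite{ScoZha90}, with the usual boundary-adapted choice of averaging facets. For nodes on \(\partial\Omega\) the averaging facet is a boundary facet, and for interior nodes it is an arbitrary facet containing the node. Since the zero extension has trace \(\widetilde{g}\) on \(\partial\Omega\), the boundary values of the interpolant depend only on \(\widetilde{g}\). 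Hence \(\restr{u_{hg}}{\partial\Omega}\) vanishes on \(\Gamma^{\text{C}}\), at least on facets where \(\widetilde g=0\), and this gives \(u_{hg}\in H^1_{\Gamma^{\text C}}(\Omega)\). The projection property \(\restr{u_{hg}}{\Gamma}=g\) for \(g\in \restr{\PkTh[]}{\Gamma}\cap H^{1/2}_{00}(\Gamma)\) is the standard feature of Scott--Zhang: boundary degrees of freedom are computed by \(L^2\)-dual functionals on boundary facets, which reproduce discrete traces exactly.

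For the stability bound I will use the local \(H^1\)-stability of Scott--Zhang,
\[
\abs{\mathcal{I}^{SZ}_h v}_{H^1(K)} + h_K^{-1}\norm{\mathcal{I}^{SZ}_h v}_{L^2(K)} \le C \bigl(\abs{v}_{H^1(\omega_K)} + h_K^{-1}\norm{v}_{L^2(\omega_K)}\bigr),
\]
where \(\omega_K\) is the element patch around \(K\). Applied in its global \(H^1\)-stable form, \(\norm{\mathcal{I}^{SZ}_h v}_{H^1(\Omega)}\le C\norm{v}_{H^1(\Omega)}\) with \(C\) depending only on shape regularity, this gives \(\norm{u_{hg}}_{H^1(\Omega)}\le C\norm{\widetilde u_\epsilon}_{H^1(\Omega)} = C\norm{u_\epsilon}_{H^1(\Omega_\epsilon)}\). \Cref{lem:ThinDomainElliptic} then supplies the factor \(\epsilon^{-1/2}\norm{g}_{H^{1/2}_{00}(\Gamma)}\).

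The support property \(\supp u_{hg}\subset\Omega_\epsilon\) is the delicate point, and I expect it to be the main obstacle. A nodal value of \(\mathcal{I}^{SZ}_h v\) only involves \(v\) on one facet containing that node. The value is therefore zero whenever that facet lies outside \(\Omega_{\epsilon}\), i.e. in the region where \(\widetilde u_\epsilon=0\). So \(u_{hg}\) is supported in the union of elements touching nodes within distance roughly \(h\) of \(\Omega_\epsilon\), which overshoots by \(\mathcal O(h)\). This is where the hypothesis \(2h\le\epsilon\) enters. I will apply the construction with \(\epsilon/2\) in place of \(\epsilon\), i.e. interpolate the zero extension of \(u_{\epsilon/2}\). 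The support then lies in an \((\epsilon/2+2h)\)-neighbourhood at worst, and I will check that with the one-facet choice \(h\) suffices, giving a neighbourhood inside \(\Omega_\epsilon\). The stability constant changes only by \(\sqrt2\).

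One must also ensure that the interior averaging facets do not straddle regions in a way that breaks this support argument. To settle this I will argue element by element: a basis function \(\varphi_a\) with nonzero coefficient requires its facet to meet \(\Omega_{\epsilon/2}\), hence \(\operatorname{dist}(a,\Gamma)\le \epsilon/2+h\). Consequently \(\supp\varphi_a\) lies within distance \(\epsilon/2+2h\le\epsilon\) of \(\Gamma\), possibly after relaxing to \(\epsilon/2+h\) via the one-facet choice, which completes the proof.
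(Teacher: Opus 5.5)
Your construction is the paper's: Scott--Zhang interpolation of the zero extension of $u_{\epsilon/2}$, global $H^1$-stability combined with \Cref{lem:ThinDomainElliptic} (losing only a factor $\sqrt{2}$), and boundary-facet averaging for the trace properties.

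The step that fails as written is the support bookkeeping you flagged yourself. Your bounds $\operatorname{dist}(a,\Gamma)\le\epsilon/2+h$ and $\operatorname{dist}(x,\Gamma)\le\epsilon/2+2h$ for $x\in\supp\varphi_a$ are right. However, $\epsilon/2+2h\le\epsilon$ is equivalent to $4h\le\epsilon$, whereas $2h\le\epsilon$ only gives $\epsilon/2+h\le\epsilon$. The hoped-for improvement to $\epsilon/2+h$ ``via the one-facet choice'' is not available for a generic averaging facet. The facet $\sigma_a$ may run from $a$ toward $\Gamma$ while an element of $\supp\varphi_a$ lies on the far side of $a$, so a reach of $\epsilon/2+2h$ cannot be excluded. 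The paper's one-line argument via the element patch of $K$ gives the same $\epsilon/2+2h$, so the looseness lies in the stated hypothesis rather than in your idea.

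The simplest repair is to assume $4h\le\epsilon$, which is equally harmless. Alternatively, in the regime $2h\le\epsilon<4h$, keep only the Scott--Zhang coefficients at nodes on $\overline{\Gamma}$ and set all others to zero. This function has the same trace on $\dfulldomain$ and is supported within distance $h\le\epsilon/2$ of $\Gamma$. Each coefficient is bounded by $Ch^{-1/2}\norm{g}_{L^2(\sigma_a)}$ and $\abs{\varphi_a}_{H^1(\Omega)}\lesssim 1$ in two dimensions. Hence its $H^1$-norm is $\lesssim h^{-1/2}\norm{g}_{L^2(\Gamma)}\lesssim\epsilon^{-1/2}\norm{g}_{H^{1/2}_{00}(\Gamma)}$ in that regime.

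A smaller point: the hedge ``at least on facets where $\widetilde g=0$'' leaves the two endpoints of $\Gamma$ unresolved. To obtain $u_{hg}\in H^1_{\Gamma^{\text{C}}}(\Omega)$ you must choose $\sigma_a\subset\Gamma^{\text{C}}$ for every node $a\in\overline{\Gamma^{\text{C}}}$, so that these coefficients vanish. This is the Scott--Zhang prescription for mixed boundary conditions, which is what the paper's reference to their Section~5 amounts to. It does not conflict with the projection property, since a discrete $g$ whose zero extension lies in $H^{1/2}(\dfulldomain)$ must vanish at the endpoints of $\Gamma$.
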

Note that \(2h\leq \epsilon\) is not a restriction, since it only prohibits degenerate cases where the mesh size is too large to make sense of the claim \(\supp u_{hg}\subset \Omega_\epsilon\).
\begin{proof}[Proof of \Cref{cor:SupportedDiscreteLifting}]
    Let \(\epsilon' = \epsilon/2\) and define \(u_{\epsilon'} \in H^1(\Omega_{\epsilon'})\) according to Lem\-ma~\ref{lem:ThinDomainElliptic}.
    Since it holds \(\restr{u_{\epsilon'}}{\partial\Omega_{\epsilon'} \setminus \Gamma} = 0\), we can extend by zero to the whole of the domain  $\Omega$.
    We denote this extension with \(\widetilde{u}_{\epsilon'} \in H^1_{\Gamma}(\Omega)\), satisfying the estimate
    \begin{equation*}
        \norm{\widetilde{u}_{\epsilon'}}_{H^1(\Omega)}
            = \norm{u_{\epsilon'}}_{H^1(\Omega_{\epsilon'})}
            \leq C\sqrt{2}\epsilon^{-1/2}\norm{g}_{H^{1/2}_{00}(\Gamma)},
        \qquad \supp \widetilde{u}_{\epsilon'} \subset \Omega_{\epsilon'}.
    \end{equation*} 

    Furthermore, let \(\Pi_h^{\text{SZ}} : H^1(\Omega) \to \PkTh[]\) denote the Scott-Zhang interpolation operator, defined in \cite[Sec.~2]{ScoZha90}.
    Then, the discrete lifting defined as \(u_{hg} = \Pi_h^{\text{SZ}} \widetilde{u}_{\epsilon'}\) fulfills the requirements.

    With the stability in \cite[Thm.~3.1]{ScoZha90} and \Cref{lem:ThinDomainElliptic} it holds that
    \begin{equation*}
        \norm{u_{hg}}_{H^1(\Omega)} 
            = \norm{\Pi_h^{\text{SZ}}\widetilde{u}_{\epsilon'}}_{H^1(\Omega)}
            \leq C\norm{\widetilde{u}_{\epsilon'}}_{H^1(\Omega)}
            \leq C\sqrt{2}\epsilon^{-1/2}\norm{g}_{H^{1/2}_{00}(\Gamma)}.
    \end{equation*}
    The discrete lifting is locally supported on \(\Omega_{\epsilon}\) since for any \(K\in \Th\) the value of the interpolation \(\restr{u_{hg}}{K}\) is completely controlled by its surrounding patch.
    Additionally, it fulfills the desired properties on the boundary, as mentioned in \cite[Sec.~5]{ScoZha90}.
\end{proof}

It remains to prove \Cref{lem:ThinDomainElliptic}.
\begin{proof}[Proof of \Cref{lem:ThinDomainElliptic}]
Note that the extension by zero \(\widetilde{g} \in L^2(\partial \Omega_\epsilon)\) is an element of \(H^{1/2}(\partial\Omega_\epsilon)\) since \(g\in H^{1/2}_{00}(\Gamma)\) by assumption.
Hence, since \(\Omega_\epsilon\) is Lipschitz, the Lax-Milgram Lemma ensures that a unique weak solution \(u_\epsilon\in H^1(\Omega_\epsilon)\) exists.
Furthermore, by the Dirichlet principle, \(u_\epsilon\) is the minimizer of the functional
\begin{equation*}
    J\,\defpnt\, V_{\widetilde{g}} \to \R,\quad
        \phi \mapsto \frac{1}{2}\int_{\Omega_\epsilon} |\nabla \phi(x)|^2 \dintx,
    \qquad
        V_{\widetilde{g}} =\setc{\phi \in H^1(\Omega_\epsilon)}{\restr{\phi}{\partial\Omega_\epsilon} = \widetilde{g}}.
\end{equation*}
Thus, by the minimizing property and Poincaré inequality, it is sufficient to construct 
\begin{equation*}
    v\in V_{\widetilde{g}},\qquad\text{s.t.  } \norm{v}_{H^1(\Omega_\epsilon)} \leq C \epsilon^{-1/2} \norm{g}_{H^{1/2}_{00}(\Gamma)}
\end{equation*}
in order to show \eqref{eq:TubularPDE:APrioriEstimate}.
In the following, we construct such a suitable \(v\) with respect to a finite open cover of \(\Gamma\), following essentially ideas from \cite{Gri11}.

Let \(V_\Gamma = \monosetc{\alpha_1,\ldots \alpha_N}\subset \R^2\) denote the vertices of \(\Gamma\).
For every vertex we define
\begin{equation*}
    U_j = B_{\epsilon}(\alpha_j) = \setc{x\in\R^2}{|x-\alpha_j|< \epsilon},
    \qquad j=1,\ldots,N.
\end{equation*}
Note that we can always choose $\epsilon_0>0$ in \eqref{eq:TubularSubdomain} sufficiently small such that for all $\epsilon \in (0, \epsilon_0)$ the set $U_j \cap \Omega$ contains no vertex $\alpha_k$ with $k \neq j$, and no edge except for the two edges adjacent to $\alpha_j$.

The \(N\) vertices are connected by \(N-1\) straight line elements, and can each be covered by a tubular neighborhood \(V_m\) of size \(\epsilon_m>0\), for \(m=1,\ldots,N-1\), such that \(V_m\cap \Omega\) does not contain any vertex and any other line element, but overlaps with the balls at the neighboring vertices.
Note, that we need \(\epsilon_m \leq \epsilon\) in general to accomplish this construction.
Nonetheless, the additional restrictions on \(\epsilon_m\) only depend on the angles of adjacent edges.
Hence, there exists a \(C> 0\) depending only on the smallest interior angle of the domain such that
\begin{equation}\label{eq:ComparableEpsilon}
    \max_{m=1,\ldots, N-1} \epsilon_m^{-1}
        \leq C \epsilon^{-1},
\end{equation}
see also Fig.~\ref{Fig:Tubuluar}.

\begin{figure} 
    \centering
    \includegraphics{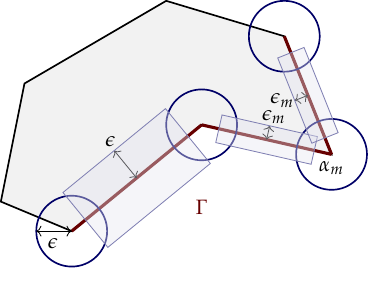}
    \caption{Finite cover of the boundary \(\Gamma\) for given \(\epsilon>0\). Note that \(\epsilon_m < \epsilon\) due to the small interior angle at the vertex \(\alpha_m\).}
    \label{Fig:Tubuluar}
\end{figure}

Thus, the two sets \(\monosetc{U_j}_{j=1}^N\) and \(\monosetc{V_j}_{j=1}^{N-1}\) are an open cover of \(\Gamma\), 
and we stress that \(N\) does not depend on \(\epsilon\) but only on the number of vertices.

We choose a partition of unity subordinate to the cover, i.e., \(\phi_j,\psi_m \in C^\infty_{\operatorname{c}}(\R^2)\) such that
\begin{equation}\label{eq:PartitionOfUnity}
    1 = \sum_{j=1}^{N} \phi_j(x) + \sum_{m=1}^{N-1} \psi_m(x),\,\,\,\, x\in \Gamma,
    \qquad \supp \phi_j \subset U_j,\quad \supp \psi_m \subset V_m.
\end{equation}
Next, we construct a local lifting in every corner.
Let \(j\in \monosetc{1,\ldots,N}\).
We define the cone \(K_j = \Omega\cap U_j\) of radius \(\epsilon\) at the corner \(\alpha_j\) and set \(g_j = \phi_j g\).
We denote the legs of \(K_j\) with \(\Gamma_j = \overline{K_j}\cap \Gamma\) and the arc with \(A_j\),
forming the boundary of \(K_j\).
Note, that the support of \(\phi_j\) is compact in \(U_j\) and, thus, the zero extension of \(g_j\) to \(\partial K_j\) is element of \(H^{1/2}(\partial K_j)\).
Hence, we conclude that there exists a unique \(v_j \in H^1(K_j)\) solving
\begin{equation}\label{eq:ConeProblem}
    -\laplace v_j = 0,\text{  in } K_j,
    \qquad \restr{v_j}{\Gamma_j} = g_j, \text{  on } \Gamma_j,
    \qquad \restr{v_j}{A_j} = 0, \text{  on } A_j.
\end{equation}
We prove the desired norm estimate with a scaling argument.
After possible rigid motion, we can assume that the cone has the polar-coordinate representation
\begin{equation*}
    K_j = \setc{(r,\theta)\in \R^+\times [0,2\pi]}{ 0 < r < \epsilon,\quad \theta \in (0,\varphi)},
\end{equation*}
for some angle \(\varphi\in (0,2\pi)\).

Let \(\widehat{K} = \setc{(r,\theta) \in \R^+\times [0,2\pi]}{0< r < 1,\quad \theta \in (0,\varphi)}\) denote the reference cone with angle $\varphi$,
and let \(T\,\defpnt\, \widehat{K} \to K_j\) be the reference map given in Cartesian coordinates by \((x, y) \mapsto (rx ,r y)\).
Then, the transformed function \(\widehat{v} = v_j \circ T\) solves
\begin{equation*}
    -\laplace \widehat{v} =0,\, \text{in } \widehat{K},
    \qquad \restr{\widehat{v}}{\widehat{\Gamma}} = \widehat{g},\,\text{on } \widehat{\Gamma},
    \qquad \restr{\widehat{v}}{\widehat{A}} = 0,\,\text{on } \widehat{A},
\end{equation*}
where \(\widehat{g} = g_j \circ T\), \(\widehat{\Gamma} = T^{-1}(\Gamma_j)\), and \(\widehat{A} = T^{-1}(A_j)\).
With the usual a priori estimate and the Poincaré inequality, we obtain the estimate
\(
    \norm{\nabla \widehat{v}}_{L^2(\widehat{K})} \lesssim \norm{\widehat{g}}_{H^{1/2}_{00}(\widehat{\Gamma})},
\)
where the hidden constant only depends on the angle \(\varphi\) but not on the radius \(\epsilon\).
From integral substitution, we further see that  \(\norm{\nabla v_j}_{L^2(K_j)} = \norm{\nabla \widehat{v}}_{L^2(\widehat{K})}\).
Furthermore, it holds that
\begin{equation}\label{eq:LTwoSchenkelScaling}
    \begin{aligned}
        \norm{\widehat{g}}_{L^2(\widehat{\Gamma})}^2
            &= \int_{0}^{1}
                |(g_j\circ T) (r,0)|^2 + |(g_j\circ T) (r,\varphi)|^2 \dint{r}\\
            &= \int_{0}^{1}
                |g_j (\epsilon r,0)|^2 + |g_j (\epsilon r,\varphi)|^2 \dint{r}\\
            &= \epsilon^{-1} \int_{0}^{\epsilon}
                |g_j(\widetilde{r},0)|^2 + |g_j (\widetilde{r},\varphi)|^2 \dint{\widetilde{r}}\\
            &= \epsilon^{-1}\norm{g_j}_{L^2(\Gamma_j)}^2.
    \end{aligned}
\end{equation}
For the semi-norm $\abs{\cdot}_{H^{1/2}(\widehat{\Gamma})}$, let \(\eta,\zeta \in \monosetc{0,\varphi}\).
With integral substitution, it holds that
\begin{equation}\label{eq:HEinHalbSubstitution}
    \begin{aligned}
        &\int_{0}^1 \int_{0}^1 \frac{\abs{(g_j\circ T)(r,\eta) - (g_j\circ T)(s,\zeta)}^2}{\abs{r - s}^2} \dint{r}\dint{s}\\
        &\quad = \int_{0}^1 \int_{0}^1 \frac{\abs{g_j(\epsilon r,\eta) - g_j(\epsilon s,\zeta)}^2}{\abs{r - s}^2} \dint{r}\dint{s}\\
        &\quad = \int_{0}^{\epsilon}\int_{0}^{\epsilon} \frac{\abs{g_j(\widetilde{r},\eta) - g_j(\widetilde{s},\zeta)}^2}{|\epsilon^{-1}\widetilde{r} - \epsilon^{-1}\widetilde{s}|^2} \epsilon^{-2} \dint{\widetilde{r}}\dint{\widetilde{s}}\\
        &\quad = \int_{0}^{\epsilon}\int_{0}^{\epsilon} \frac{\abs{g_j(\widetilde{r},\eta) - g_j(\widetilde{s},\zeta)}^2}{|\widetilde{r} - \widetilde{s}|^2} \dint{\widetilde{r}}\dint{\widetilde{s}}.
    \end{aligned}
\end{equation}
With the considerations in \cite[Sec.~1.5.2]{Gri11}, this shows that \(\abs{\widehat{g}}_{H^{1/2}(\widehat{\Gamma})} = \abs{ g_j}_{H^{1/2}(\Gamma_j)}\).
Repeating the substitution argument for the compatibility conditions, which we can express as weighted \(L^2\) integrals, shows that
\begin{equation}\label{eq:CompatibiltySubstitution}
\int_{0}^1 |1-r|^{-1} |(g_j\circ T)(r,\eta)|^2 \dint{r}
    = \int_{0}^\epsilon |\epsilon-\widetilde{r}|^{-1} |g_j(\widetilde{r},\eta)|^2\dint{\widetilde{r}}.\\
\end{equation} 
Thus, \eqref{eq:LTwoSchenkelScaling}--\eqref{eq:CompatibiltySubstitution} show that \(\norm{\widehat{g}}_{H^{1/2}_{00}(\widehat{\Gamma})} \leq \epsilon^{-1/2} \norm{g_j}_{H^{1/2}_{00}(\Gamma_j)}\).
Hence, we conclude that
\begin{equation}\label{eq:ScalingEstimateCone}
    \norm{\grad  v_j}_{L^2(K_j)} = \norm{\grad \widehat{v}}_{L^2(\widehat{K})}
        \lesssim \norm{\widehat{g}}_{H^{1/2}_{00}(\widehat{\Gamma})}
        \lesssim \epsilon^{-1/2} \norm{g_j}_{H^{1/2}_{00}(\Gamma_j)},
\end{equation}
with the hidden constant only depending on the angle \(\varphi\).

Next, we construct a lifting for every straight edge.
Let \(T_m = \Omega\cap V_m\) for \(m\in \monosetc{0,\ldots,N-1}\).
After possible rigid motion, we can assume that
\begin{equation*}
    T_m = (\delta^-_m, \delta^+_m) \times (0,\epsilon_m),
\end{equation*}
for \(\delta^\pm_m > 0\).
We define \(\Gamma_m = (\delta^-_m, \delta^+_m) \times \monosetc{0}\) and \(g_m = \psi_m g\).
Note that \(g_m \in H^{1/2}_{00}(\Gamma_m)\) since \(\psi_m\) is compactly supported.
Hence, there exists a unique \(w_m \in H^1(T_m)\) solving
\begin{equation*}
    -\laplace w_m = 0, \text{ in } T_m,
     \quad \restr{w_m}{\Gamma_m} = g_m, \text{ on } \Gamma_m,
     \quad \restr{w_m}{\partial T_m \setminus \Gamma_m } = 0, \text{ on } \partial T_m \setminus \Gamma_m.
\end{equation*}

In order to prove an estimate like above on the cone, we use an explicit representation of the solution.
Since \(g_m \in H^{1/2}_{00}(\Gamma_m)\), we find a sine series expansion of the data
\begin{equation*}
    g_m(x) = \sum_{n=1}^\infty g_m^n \sin\Bigl(\frac{\pi n}{L} (x-\delta^-)\Bigr),
\end{equation*}
where \(L = (\delta^+ - \delta^-)\) and \(g_m^n = \frac{2}{L} \int_{\delta^-}^{\delta^+} g_m(\zeta)  \sin\Bigl(\frac{\pi n}{L} (\zeta-\delta^-)\Bigr) \dint{\zeta}\), for \(n\geq 1\).
A separation of variables ansatz and matching this expansion at $\Gamma_{m}$ $(y=0)$ shows that the solution is of the form
\begin{equation*}
    w_m(x,y) = 
        \sum_{n=1}^{\infty} 
            g_m^n 
            \frac{\sinh\bigl(k_n(\epsilon_m - y)\bigr)}{\sinh(k_n \epsilon_m)}
            \sin\bigl(k_n(x-\delta^-)\bigr),
    \qquad k_n = \frac{n\pi}{L}.
\end{equation*}
Taking derivates and using standard orthogonality relations of trigonometric functions show that
\begin{equation*}
    \norm{\nabla w_m}_{L^2(T_m)}^2
        = \frac{L}{2} \sum_{n=1}^{\infty} |g_m^n|^2 k_n^2 \int_{0}^{\epsilon_m} \frac{\sinh^2\bigl(k_n y\bigr) + \cosh^2\bigl(k_n y\bigr)}{\sinh^2(k_n\epsilon_m)} \dint{y}.
\end{equation*}
To evaluate the latter integral, we use the identity \(\sinh^2(k_n y) + \cosh^2(k_n y) = \cosh(2 k_n y)\) and obtain
\begin{equation*}
    \frac{1}{\sinh^2(k_n \epsilon_m)} \int_{0}^{\epsilon_m} \cosh(2 k_n y) \dint{y}
        = \frac{1}{2 k_n}\frac{\sinh(2k_n \epsilon_m)}{\sinh^2(k_n \epsilon_m)}
        = \frac{1}{2 k_n} \coth(k_n \epsilon_m),
\end{equation*}
where the half-angle formula \(\sinh(2 k_n \epsilon_m) = 2 \sinh(k_n \epsilon_m)\cosh(k_n \epsilon_m)\) is used in the last equality.
Hence, we obtain the representation
\begin{equation*}
    \norm{\nabla w_m}_{L^2(T_m)}^2
        = \frac{L}{4} \sum_{n=1}^{\infty} |g_m^n|^2 k_n \coth(k_n \epsilon_m).
\end{equation*}
Application of the estimate \(\coth(k_n \epsilon_m) \leq 1 + (k_n \epsilon_m)^{-1}\) finally shows that
\begin{equation}\label{eq:ScalingEstimateStraightEdge}
    \norm{\nabla w_m}_{L^2(T_m)} \leq \epsilon^{-1/2}_m \norm{g_m}_{H^{1/2}_{00}(\Gamma_m)}.
\end{equation}

Putting everything together, we define \(v \in H^1(\Omega)\) by
\(
    v = \sum_{j=1}^{N} v_j + \sum_{m = 1}^{N-1} w_m.
\)
Note that $v$ is well-defined since any \(v_j\) and any \(w_m\) can be extended by zero to the whole of the domain, while remaining in \(H^1(\Omega)\) due to the homogenous Dirichlet boundary conditions at the interior boundary.
Furthermore, by the partition of unity \eqref{eq:PartitionOfUnity}, we conclude that
\begin{equation*}
    \restr{v}{\Gamma} 
        = \sum_{j=1}^N \restr{v_j}{\Gamma} + \sum_{m =1}^{N-1} \restr{w_m}{\Gamma}
        = \sum_{j=1}^N \phi_j g + \sum_{m=1}^{N-1} \psi_m g
        = g.
\end{equation*}
Finally, by \eqref{eq:ScalingEstimateCone}, \eqref{eq:ScalingEstimateStraightEdge}, \eqref{eq:ComparableEpsilon}, and the fact that the partition is finite, we obtain the desired estimate
\begin{equation*}
    \norm{\nabla v}_{L^2(\Omega)} \leq C \epsilon^{-1/2} \norm{g}_{H^{1/2}_{00}(\Gamma)},
\end{equation*}
with a constant \(C>0\), which is independent of \(\epsilon\).
This proves the claim.
\end{proof}

\section*{Acknowledgments}  
Funded by the Deutsche Forschungsgemeinschaft (DFG, German Research Foundation) -- Project-ID 258734477 -- SFB 1173.
The authors thank Benjamin~D\"orich and Roland~Maier for many helpful discussions and careful reading of the manuscript.
JD thanks the French Institute for Research in Computer Science and Automation (Inria) in Lille for the kind hospitality during his research stay. This research stay was financially supported by the Karlsruhe House of Young Scientists (KHYS).

\bibliographystyle{amsplain}	
\bibliography{bibliography}
	
\end{document}